\renewcommand{\theequation}{\thesection.\arabic{equation}}
\newtheorem{thm}{Theorem}[section]
\newtheorem{lem}[thm]{Lemma}
\newtheorem{rem}[thm]{Remark}
\begin{document}
\newcommand{\BX}{{\bf X}}
\newcommand{\cv}{{\cal V}}
\newcommand{\cW}{{\cal W}}
\newcommand{\co}{{\cal O}}

\renewcommand{\theequation}{\thesection.\arabic{equation}}
\def\@eqnnum{{\reset@font\rm (\theequation)}}

\def\abstract{
\advance \rightskip by 10mm
\advance \leftskip by 10mm
\vspace{-0.8em}
\noindent
\small{\bf Abstract.}
}
\def\endabstract{\par\normalsize\rm}

\def\Xint#1{\mathchoice
{\XXint\displaystyle\textstyle{#1}}%
{\XXint\textstyle\scriptstyle{#1}}%
{\XXint\scriptstyle\scriptscriptstyle{#1}}%
{\XXint\scriptscriptstyle\scriptscriptstyle{#1}}%
\!\int}
\def\XXint#1#2#3{{\setbox0=\hbox{$#1{#2#3}{\int}$}
\vcenter{\hbox{$#2#3$}}\kern-.5\wd0}}
\def\ddashint{\Xint=}
\def\dashint{\Xint-}

\def\a{\alpha}
\def\b{\beta}
\def\d{\delta}\def\D{\Delta}
\def\e{\epsilon}
\def\g{\gamma}\def\G{\Gamma}
\def\k{\kappa}
\def\lam{\lambda}\def\Lam{\Lambda}
\renewcommand\o{\omega}\renewcommand\O{\Omega}
\def\s{\sigma}\def\S{\Sigma}
\renewcommand\t{\theta}\def\vt{\vartheta}
\newcommand{\vphi}{\varphi}
\def\z{\zeta}

\newcommand{\tsigma}{\tilde{\s}}
\newcommand{\tbsigma}{\tilde{\bsigma}}
\def\te{\tilde{\e}}
\def\tu{\tilde{u}}

\newcommand{\bchi}{\mbox{\boldmath$\chi$}}
\newcommand{\bdelta}{\mbox{\boldmath$\delta$}}
\newcommand{\bepsilon}{\mbox{\boldmath$\epsilon$}}
\newcommand{\bfeta}{\mbox{\boldmath$\eta$}}
\newcommand{\bgamma}{\mbox{\boldmath$\gamma$}}
\newcommand{\bomega}{\mbox{\boldmath$\omega$}}
\newcommand{\bvphi}{\mbox{\boldmath$\varphi$}}
\newcommand{\bphi}{\mbox{\boldmath$\phi$}}
\newcommand{\bPhi}{\mbox{\boldmath$\Phi$}}
\newcommand{\bpsi}{\mbox{\boldmath$\psi$}}
\newcommand{\bPsi}{\mbox{\boldmath$\Psi$}}
\newcommand{\bsigma}{\mbox{\boldmath$\sigma$}}
\newcommand{\btau}{\mbox{\boldmath$\tau$}}
\newcommand{\bxi}{\mbox{\boldmath$\xi$}}
\newcommand{\brho}{\mbox{\boldmath$\rho$}}
\newcommand{\bbeta}{\mbox{\boldmath$\beta$}}
\newcommand{\bzeta}{\mbox{\boldmath$\zeta$}}

\def\bk{\boldsymbol{\kappa}}
\def\bmu{\boldsymbol\mu}
\def\bxi{\boldsymbol{\xi}}
\def\bz{\boldsymbol{\zeta}}

\def\ba{{\bf a}}
\def\bb{{\bf b}}
\def\bc{{\bf c}}
\def\be{{\bf e}}
\def\bff{{\bf f}}
\def\bg{{\bf g}}
\def\bn{{\bf n}}
\def\bp{{\bf p}}
\def\bq{{\bf q}}
\def\bs{{\bf s}}
\def\bt{{\bf t}}
\def\bu{{\bf u}}
\def\bv{{\bf v}}
\def\bw{{\bf w}}
\def\bx{{\bf x}}
\def\by{{\bf y}}
\def\bzz{{\bf z}}

\def\bD{{\bf D}}
\def\bE{{\bf E}}
\def\bF{{\bf F}}
\def\bH{{\bf H}}
\def\bJ{{\bf J}}
\def\bV{{\bf V}}
\def\bU{{\bf U}}
\def\bW{{\bf W}}
\def\bX{{\bf X}}
\def\bY{{\bf Y}}

\def\cA{{\cal A}}
\def\cC{{\cal C}}
\def\cD{{\cal D}}
\def\cE{{\cal E}}
\def\cF{{\cal F}}
\def\cG{{\cal G}}
\def\cI{{\cal I}}
\def\cJ{{\cal J}}
\def\cK{{\cal K}}
\def\cL{{\cal L}}
\def\cO{{\cal O}}
\def\cP{{\cal P}}
\def\cQ{{\cal Q}}
\def\cR{{\cal R}}
\def\cS{{\cal \Sigma}}
\def\cT{{\cal T}}
\def\cU{{\cal U}}
\def\cV{{\cal V}}

\def\scT{{_\cT}}
\def\sD{{_D}}
\def\sE{{_E}}
\def\sF{{_F}}
\def\sFz{{_{F_z}}}
\def\sK{{_K}}
\def\sI{{_I}}
\def\sb{{_b}}
\def\sN{{_N}}

\def\curl{{{\bf curl} \ }}
\def\rot{{\mbox{rot}\ }}
\def\BPI{{\bf \Pi}}

\def\cth{\cT_h}
\def\ctH{\cT_H}

\def\tJ{\tilde{\J}}

\def\hK{\widehat{K}}
\def\hx{\widehat{x}}
\def\hy{\widehat{y}}
\def\bhv{\widehat{\bv}}

\def\l{\ell}
\def\bl{\boldsymbol{\ell}}
\def\col{\colon}
\def\f12{\frac12}
\def\dfrac{\displaystyle\frac}
\def\dint{\displaystyle\int}
\def\nab{\nabla}
\def\p{\partial}
\def\sm{\setminus}
\def\dsum{\displaystyle\sum}
\newcommand{\pp}[2]{\frac{\partial {#1}}{\partial {#2}}}
\def\bzero{{\bf 0}}

\def\divv{\nab\cdot}
\def\divx{\nab_x\cdot}
\def\divtx{\nab_{t,x}\cdot}
\def\nabx{\nab_x}

\newcommand{\grad}{\nabla}
\newcommand{\curlt}{{\nabla \times}}
\newcommand{\gperp}{\nabla^{\perp}}
\newcommand{\gradt}{\nabla\cdot}

\def\forallqq{\quad\forall\,}
\def\aph{A^{1/2}}
\def\amh{A^{-1/2}}

\def\osc{{\rm osc \, }}

\def\Im{{\rm Im}}
\newcommand{\tr}{{\rm tr}}
\def\divvr{{\rm div}}
\def\curllr{{\rm curl}}
\def\curll{{\rm curl}}
\def\curl{{\bf curl}}
\newcommand{\bgrad}{{\bf grad}}
\newcommand\diam{\mathrm{diam\,}}
\renewcommand\Im{\mathrm{Im\,}}
\def\Span{\mbox{Span}}
\def\supp{\mbox{supp\,}}
\newcommand{\trace}{{\rm trace}}

\newcommand{\tri}{|\!|\!|}
\newcommand{\ljump}{\lbrack\!\lbrack}
\newcommand{\rjump}{\rbrack\!\rbrack}
\newcommand{\bdm}{\begin{displaymath}}
\newcommand{\edm}{\end{displaymath}}
\newcommand{\beq}{\begin{equation}}
\newcommand{\eeq}{\end{equation}}
\newcommand{\beqa}{\begin{eqnarray}}
\newcommand{\eeqa}{\end{eqnarray}}
\newcommand{\beqas}{\begin{eqnarray*}}
\newcommand{\eeqas}{\end{eqnarray*}}
\newcommand{\ul}{\underline}
\newcommand{\wh}{\widehat}
\newcommand{\la}{\langle}
\newcommand{\ra}{\rangle}

\newcommand{\Lt}{L^2(\Omega)}
\newcommand{\Lts}{L^2(\Omega)^2}
\newcommand{\Ltc}{L^2(\Omega)^3}
\newcommand{\Ho}{H^1(\Omega)}
\newcommand{\Hoh}{H^1(\wh{\Omega})}
\newcommand{\Hoi}{H^1(\Omega_i)}
\newcommand{\Hos}{H^1(\Omega)^2}
\newcommand{\Hoc}{H^1(\Omega)^3}
\newcommand{\Hoch}{H^1(\wh{\Omega})^3}
\newcommand{\Hoci}{H^1(\Omega_i)^3}
\newcommand{\Hoz}{H^1_0(\Omega)}
\newcommand{\Ht}{H^2(\Omega)}
\newcommand{\Hti}{H^2(\Omega_i)}
\newcommand{\Hts}{H^2(\Omega)^2}
\newcommand{\Htc}{H^2(\Omega)^3}
\newcommand{\Htz}{H^0(\Omega)}
\newcommand{\Hh}{H^{1/2}(\Gamma)}
\newcommand{\Hhi}{H^{1/2}(\Gamma_i)}
\newcommand{\Hmh}{H^{-1/2}(\Gamma)}
\newcommand{\Hdiv}{H(\divvr;\,\Omega)}
\newcommand{\Hdivh}{H(\divv;\,\wh \Omega)}
\newcommand{\hcurl}{H(\curl\,A;\,\Omega)}
\newcommand{\Hcurl}{H(\curll\,A;\,\Omega)}
\newcommand{\Hcrl}{H(\curll\,;\,\Omega)}
\newcommand{\hcrl}{H(\curl\,;\,\Omega)}
\newcommand{\Hcrlh}{H(\curll\,;\,\wh\Omega)}
\newcommand{\hcrlh}{H(\curl\,;\,\wh\Omega)}
\newcommand{\Wdiv}{\BW_0(\mbox{\divv}\,;\,\Omega)}
\newcommand{\Wcurl}{\BW_0(\mbox{\curl}\,A;\,\Omega)}
\newcommand{\WcrossV}{\BW \times V}

\def\grad{{\nabla}}

\def\calS{{\cal S}}
\def\calT{{\cal T}}
\def\cA{{\mathcal A}}
\def\cB{{\cal B}}
\def\cD{{\mathcal{D}}}

\def\cH{{\cal H}}
\def\ba{{\mathbf{a}}}
\def\bbz{{\mathbf{z}}}

\def\beps{{\mathbf{\epsilon}}}

\def\brho{\bf{\rho}}
\def\cM{{\mathcal{M}}}
\def\cN{{\mathcal{N}}}
\def\cT{{\mathcal{T}}}
\def\cE{{\mathcal{E}}}
\def\cP{{\mathcal{P}}}
\def\cF{{\mathcal{F}}}

\def\cB{{\mathcal{B}}}
\def\cG{{\mathcal{G}}}

\def\cL{{\mathcal{L}}}
\def\cJ{{\mathcal{J}}}
\def\cK{{\mathcal{K}}}
\def\cV{{\mathcal{V}}}
\def\cW{{\mathcal{W}}}
\def\bP{{\mathbf{P}}}
\def\bS{{\mathbf{S}}}
\def\bQ{{\mathbf{Q}}}

\def\bRT{{\mathbf{RT}}}
\def\bBDM{{\mathbf{BDM}}}

\def\bSigma{{\mathbf{\Sigma}}}
\newcommand{\lJump}{[\![}
\newcommand{\rJump}{]\!]}
\newcommand{\jump}[1]{[\![ #1]\!]}

\newcommand{\sd}{\bsigma^{\Delta}}
\newcommand{\rd}{\brho^{\Delta}}

\newcommand{\eps}{\epsilon}

\def\ma{{\mathtt a}}
\def\mb{{\mathtt b}}
\def\mc{{\mathtt c}}
\def\md{{\mathtt d}}

\def\eg{{\mathtt {eg}}}
\def\full{{\mathtt {full}}}

\title [Robust Augmented Mixed FEMs for Stokes Interface Problems]{Robust Augmented Mixed FEMs for Stokes Interface Problems with Discontinuous Viscosity in Multiple Subdomains }
\author[Y. Liang and S. Zhang]{Yuxiang Liang and Shun Zhang}
\address{Department of Mathematics, City University of Hong Kong, Kowloon Tong, Hong Kong, China}
\email{yuxiliang7-c@my.cityu.edu.hk, shun.zhang@cityu.edu.hk}
\thanks{This work was supported in part by
Research Grants Council of the Hong Kong SAR, China, under the GRF Grant Project No. CityU 11316222}
\date{\today}

\keywords{}

\maketitle
\begin{abstract}
A stationary Stokes problem with a piecewise constant viscosity coefficient in multiple subdomains is considered in the paper. For standard finite element pairs, a robust inf-sup condition is required to show the robustness of the discretization error with respect to the discontinuous viscosity, which has only been proven for the two-subdomain case in the paper [Numer. Math. (2006) 103: 129–149]. To avoid the robust inf-sup condition of a discrete finite element pair for multiple subdomains, we propose an ultra-weak augmented mixed finite element formulation.   By adopting a Galerkin-least-squares method, the augmented mixed formulation can achieve stability without relying on the inf-sup condition in both continuous and discrete settings. The key step to have the robust a priori error estimate is that two norms, one energy norm and one full norm, are used in the robust continuity. The robust coercivity is proved for the energy norm. A robust a priori error estimate in the energy norm is then derived  with the best approximation property in the full norm for the case of multiple subdomains.
Additionally, the paper introduces a singular Kellogg-type example with exact solutions for the first time. Extensive numerical tests are conducted to validate the robust error estimate. 
\end{abstract}

\section{Introduction}\setcounter{equation}{0}
In this paper, we consider the Stokes interface boundary value problem in multi-subdomains.
We assume that, $\{\O_i\}_{i=1}^L$ is a partition of a bounded polygonal domain $\O$ with $\O_i$ being an open bounded polygonal subdomain. Let $\nu$ be the viscosity coefficient such that $\nu$ equals a constant $2\mu_i$ on the subdomain $\O_i$, where $\mu_i$ is the dynamic/shear viscosity, and we assume that 
$$
0< \lambda < \mu_i < \Lambda \quad \forall i = 1\cdots L.
$$
The velocity-pressure formulation of the Stokes equation is 
\beq
\label{vp0}
\left\{
\begin{array}{lllll}
- \gradt(\nu \beps(\bu)) +\nabla p &=& \bff   & \mbox{in } \O,
 \\[1mm]
 \gradt \bu &=& 0 & \mbox{in } \O,
 \\[1mm]
\bu &=& 0 & \mbox{on } \partial\O,
\end{array}
\right.
\eeq
where $\beps(\bu)$ is the strain tensor.
As discussed in \cite{OR:06} and \cite{SG:17}, the motivation of studying this type of problem is the multi-phase incompressible flow, where Stokes equations with discontinuous density and viscosity coefficients appear, see \cite{GR:11}. For the two-subdomain case, Olshanskii and Reusken \cite{OR:06} proved robust inf–sup results for both the continuous and discrete cases for the velocity-pressure formulation in the sense that the inf-sup constants are uniform with respect to the jump in the viscosity coefficient. However, extending these robust results to the multi-subdomain case remains an open challenge. Song and Gao \cite{SG:17} only proved the inf-sup constant depending on the second smallest viscosity constant, which is not a robust result. 

To handle the multi-subdomain case, we seek ways to avoid the inf-sup condition. One such approach using finite element techniques without inf-sup conditions is the least-squares finite element method (LSFEM)  \cite{CLMM:94,Jiang:98,BG:09,CFZ:15,LZ:18,LZ:19,QZ:20,Zhang:23,LZ:23}. However, as discussed in \cite{LZ:24}, the LSFEM struggles with robustness even in simpler cases like diffusion problems with discontinuous coefficients. In addition to the conventional LSFEM, another way that uses the least-squares philosophy is the Galerkin-Least-Squares (GaLS) method. A specialized GaLS method known as the augmented mixed finite element method, first introduced in \cite{MH:02}, offers automatic stability without the imposition of inf-sup conditions on discrete approximation spaces. Earlier contributions of GaLS methods can be found in \cite{Franca:87,FH:88}. The group of Gatica made many important and seminal contributions on developing the augmented mixed finite element method to many problems, see for example \cite{Gatica:06,BGGH:06,FGM:08,CGOT:16,CGO:17,AGR:20}. In \cite{LZ:24}, we show that for the diffusion problem with discontinuous coefficients, two versions of augmented mixed finite element methods have robust a priori and a posteriori error estimates. In this paper, we generalize the result in \cite{LZ:24} to the case of the Stokes interface problem with multi-subdomains.

In the augmented mixed finite element method, we start from a mixed stress-velocity formulation and add least-squares terms to the formulation to enhance stability. For the Stokes interface problem with discontinuous coefficients, the symmetric stress tensor $\nu\beps(\bu) = \frac{1}{2}\nu(\nabla \bu + ( \nabla \bu )^t)$ plays a central role. Due to the fact that $\nu$ is a piecewise constant function, $\nu$ cannot be taken out of the differential operator, the popular pseudostress formulation \cite{FGM:08,CTVW:10,CWZ:10,CZ:12stokes} cannot be used. We need to handle the symmetric $\nu\beps(\bu)$ directly. On the one hand, a common finite element to approximate the stress tensor is given by a un-symmetric row-wise approximation using Raviart–Thomas elements. The symmetry is often imposed weakly by letting it orthogonal to all skew-symmetric tensors, which is a new variable, the vorticity tensor (the skew-symmetric part of the velocity gradient) as an additional unknown. This may lead to new inf-sup requirements. In \cite{AGR:20}, a new formulation is proposed for the linear elasticity where the symmetry is imposed ultra-weakly. We modify the approach in  \cite{AGR:20} to the case of the Stokes interface problem with multi-subdomains. In our augmented formulation,  consistent least-squares terms are added to the ultra-weak mixed formulation. A key step of our robust a priori analysis is in the continuity of the bilinear form, we use two different norms: a viscosity-dependent energy norm $\tri \cdot \tri_{\eg,\theta}$ (defined in \eqref{norm_eg}) and a viscosity-dependent full norm $\tri \cdot \tri_{\full,\theta}$ (defined in \eqref{norm_full}), see Lemma \ref{lem_con}. The robust coercivity is proved with respect to the viscosity-dependent energy norm. Then we have the robust a priori error estimate \eqref{Cea}. It is important to realize that on the left-hand side of \eqref{Cea}, the norm is the energy norm, while on the right-hand side of \eqref{Cea}, the norm is a standard $\nu$-weighted norm, where we have standard approximation properties.  

For the a posteriori error estimator, as discussed in \cite{LZ:24}, the standard least-squares functional error estimator \cite{CLW:04} can be used as a posteriori error estimator. On the other hand, it is hard to prove the robustness with respect to $\nu$ for the Stokes problem in the current setting. Nevertheless, numerical experiments are given to show that they are indeed $\nu$-robust for the test numerical problems.

An additional important contribution of this paper is the construction of a Kellogg-type example with an exact solution for the first time. Kellogg \cite{Kellogg:74} constructed examples with exact solutions for the diffusion problem with discontinuous coefficients.  This example has been widely adopted as a benchmark test to evaluate various a posteriori error estimators in the literature, such as \cite{MNS:02,CD:02,Pet:02,CZ:09,Mit:13,adaptiveVEM:23}.
However, for the Stokes problem, a comparable test problem has been absent until now.  We present a detailed derivation of a nonlinear system to generate a series of examples with exact solution and singularity due to intersecting interfaces for the first time. The solution is divergence free and can have different regularity by tuning the coefficients. This  example not only fills a gap in the field but also serves as a valuable tool for evaluating the performance of various numerical methods for solving the Stokes equation.

In this paper, for the simplicity of theoretical analysis, we assume that the interface is aligned with the computational mesh. For the case more complicated Stokes interface problems in the curved interface settings, cut finite element methods \cite{HLZ:14}, Xfem extended finite element methods \cite{KGR:16}, and a first-order system least-squares method \cite{Ber:18} are developed. 

For the simpler diffusion problem with discontinuous coefficients, the robust (but not local optimal) a priori  and robust residual type a posteriori error estimate for the energy norm was obtained for the conforming FEM \cite{BeVe:00}. Robust and local optimal {a priori} error estimates have also been derived for mixed FEM in \cite{Zhang:20mixed} and nonconforming FEM and discontinuous Galerkin FEM in \cite{CHZ:17} without a restrictive assumption on the distribution of the coefficients. We also present a detailed discussion of the robust and local optimal {a priori} error estimate for the conforming FEM in \cite{CHZ:17}. For recovery-based error estimators, robust a posteriori error estimates are obtained by us in \cite{CZ:09,CZ:10a,CYZ:11}. Robust equilibrated error estimators were developed by us in \cite{CZ:12,CCZ:20,CCZ:20mixed}. Robust residual-type of error estimates without a restrictive assumption on the distribution of the coefficients is developed for nonconforming and DG approximations in \cite{CHZ:17}.

The rest of the paper is organized as follows. In Section 2, we introduce some notations and discuss the finite element spaces, especially the approximation property of the $\nu$-weighted space. We discuss the Stokes interface problem with some equivalent formulations in Section 3. In Section 4, the energy norm that plays a central role in the robust analysis is discussed. The augmented mixed formulations are presented in Section 5 and its finite element approximations are discussed in Section 6. In Section 7, we discuss the related least-squares formulations and a posteriori error analysis. Numerical test are conducted in Section 8. Some final comments are given in Section 9. In the appendix, we give a detailed derivation of the nonlinear system to generate the Kellogg-type example with exact solution.

\section{Notations and Finite Element Spaces}\setcounter{equation}{0}
\subsection{Notations}
For $d=2$ or $3$, let $\bv =(v_1,\, ... \, , \, v_d)^t$ be a vector function, $\btau=\left(\tau_{ij}\right)_{d\times d}$ be a tensor function, and denote $\btau$'s $i^{th}$-row by $\btau_i=(\tau_{i1},...,\tau_{id})$ for $i=1,...,d$. We define
\[
 \nabla \bv=
 \left(\dfrac{\p v_i}{\p x_j}\right)_{d\times
 d}\quad\mbox{and}\quad 
 \gradt\btau=(\gradt\btau_1,\,...\, ,\,\gradt\btau_d)^t.
 \]
The dot product of a vector $\bv$ and a tensor $\btau$ is defined by
$
\bv \cdot \btau = (\bv\cdot \btau_1, ..., \bv\cdot \btau_d)^t.
$
So we can define the normal of $\btau$, $\bn \cdot \btau = (\bn\cdot \btau_1,...,\bn \cdot \btau_d)^t$. We also define the identity tensor $ I = (\delta_{ij})_{d\times d}$, where the Kronecker $ \delta_{ii} = 1 $ and $\delta_{ij} =0$, if $i\neq j$. The inner product between tensors is defined by
$
\bsigma : \btau = \sum_{i=1}^d\sum_{j=1}^{d}\sigma_{ij}\tau_{ij}.
$
The trace of a tensor is defined by $ \tr \btau = \btau : I = \sum_{i=1}^d \btau_{ii}$.

Define $H(\divvr;\O) = \{\btau\in L^2(\O)^{d},~\gradt \btau\in L^2(\O)\}$ with the norm $\|\btau\|_{H(\divvr,\O)} = (\|\btau\|^2_0+\|\gradt\btau\|^2_0)^{1/2}$. We use the notation $\bH(\divvr;\O):=H(\divvr;\O)^d$ to denote the space whose members are tensors with row vectors in $H(\divvr;\O)$. For a function $\nu>0$, its weighted subspace is 
\beq
\bH_\nu(\divvr;\O) = \{\btau\in H(\divvr;\O)^d;\int_\O \nu^{-1}\tr(\btau) dx =0\}.
\eeq
Define $
\bH^1_0(\O):= H_0^1(\O)^d = \{\bv \in H^1(\O)^d; \bv = 0 \mbox{ on } \partial\O\}$ 
and
$L^2_{\nu}(\O) = \{q\in L^2(\O);~(\nu^{-1}q,1)_\O = 0\}$.

\subsection{Finite element spaces}
Let $\cT = \{K\}$ be a triangulation of $\O$ using simplicial elements. The mesh $\cT$ is assumed to be regular.  Furthermore, assume that interfaces do not cut through any element $K\in\cT$. Let $P_k(K)$ for $K\in\cT$ be the space of polynomials of degree $k$ on an element $K$. Denote the standard linear and quadratic conforming finite element spaces by 
$$
S_{1,0} = \{v\in  H_0^1(\O): v|_K \in P_1(K),~\forall~K\in\cT \}
\quad\mbox{and}\quad
S_{2,0} = \{v\in H_0^1(\O): v|_T \in P_2(K),~\forall~K\in\cT\},
$$
respectively. 

Denote the local lowest-order Raviart-Thomas (RT) \cite{RT:77} on element $K\in\cT$ by $RT_0(K)=P_0(K)^d +\bx\,P_0(K)$. Then the lowest-order $\Hdiv$ conforming RT space is defined by
 \[
 RT_{0}=\{\btau\in H(\divvr;\Omega):
 \btau|_K\in RT_0(K)\,\,\,\,\forall\,\,K\in\cT\}.
 \] 
Similarly,  the lowest-order  $\Hdiv$-conforming Brezzi-Douglas-Marini (BDM) space is defined by
$$
BDM_{1} = \{\btau\in H(\divvr;\O): \btau|_K \in P_1(K)^d,~\forall~K \in\cT\}.
$$
We discuss some \noindent{local approximation results}. By Sobolev's embedding theorem, $H^{1+s}(\O)$, with $s>0$ for two dimensions and $s > 1/2$ for three dimensions, is embedded in $C^0(\O)$. Thus, we can define the nodal interpolation $I^{nodal}_h$ of a function $v\in H^{1+s}(\O)$ with $I^{nodal}_hv \in S_{1,0}$ and $I^{nodal}_hv(z) = v(z)$ for a vertex $z\in \cN$. It is important to notice that the nodal interpolation is completely element-wisely defined.  
We have the following local interpolation estimate for the linear nodal interpolation $I^{nodal}_h$ with local regularity $0<s_K\leq 1$ in two dimension and $1/2<s_K\leq 1$ in there dimension \cite{DuSc:80,CHZ:17}:
\beq \label{nodal_inter}
\|\nabla(v- I^{nodal}_h v)\|_{0,K} \leq C h_K^{s_K}|\nabla v|_{s_K,K} .
\quad\mbox{and}\quad
\|v- I^{nodal}_h v\|_{0,K} \leq C h_K^{1+s_K}|\nabla v|_{s_K,K}.
\eeq

Assume that $\btau\in L^r(\O)^d\cap H(\divvr;\O)$, and locally $\btau \in H^{s_K}(K)$ with the local regularity  $1/2<s_K\leq 1$. Let $I^{rt}_h$ be the canonical RT interpolation from $L^r(\O)^d\cap H(\divvr;\O)$ to $RT_{0}$. Then the following local interpolation estimates hold for local regularity $1/2<s_K\leq 1$ with the constant $C_{rt}$ being unbounded as $s_K\downarrow 1/2$ (see Chapter 16 of \cite{FE1}): 
\beq \label{RT_inter1}
\|\btau- I^{rt}_h\btau\|_{0,K} \leq C_{rt} h_K^{s_K}|\btau|_{s_K,K} \quad \forall K\in \cT.
\eeq
Due to the commutative property of the standard RT interpolation, if we further assuming that $\gradt \btau|_K \in H^{t_K}(K)$, $0<t_K\leq 1$, then
\beq \label{RT_inter2}
\|\gradt (\btau- I^{rt}_h\btau)\|_{0,K} \leq C h_K^{t_K}|\gradt\btau|_{t_K,K} \quad \forall K\in \cT.
\eeq

For $S_2$ and $BDM_1$, we will only use them when the regularity of the solution is high. For $v\in H^3(\O)$, the following local interpolation result in standard for nodal interpolation $I_h$ in $S_{2,0}$,
\beq \label{nodal_inter2}
\|\nabla(v- I_h v)\|_{0,K} \leq C h_K^{2}|v|_{3,K}.
\eeq
Also, assume that $\btau \in H^2(\O)^d$, we have the following standard interpolation result for the $BDM_{1}$, assuming that $I^{bdm}_h$ is the standard $BDM_{1}$ interpolation,
\beq \label{BDM_inter1}
\|\btau- I^{bdm}_h\btau\|_{0,K} \leq C_{bdm} h_K^{2}|\btau|_{2,K} \quad \forall K\in \cT.
\eeq
If we further assuming that $\gradt \btau|_K \in H^{t_K}(K)$, $0<t_K\leq 1$, then
\beq \label{BDM_inter2}
\|\gradt (\btau- I^{bdm}_h\btau)\|_{0,K} \leq C h_K^{t_K}|\gradt\btau|_{t_K,K} \quad \forall K\in \cT.
\eeq

\subsection{Approximation properties in weighted spaces}
We then discuss interpolation results in $\bH_{\nu}(\divvr;\O)$.
Define 
\beq
\bRT_{0,\nu} = (RT_0)^d\cap \bH_{\nu}(\divvr;\O) \quad\mbox{and}\quad
\bBDM_{1,\nu} = (BDM_1)^d\cap \bH_{\nu}(\divvr;\O)
\eeq
Define the interpolation operator $I^{rt}_{h,\nu}: \bH_{\nu}(\divvr;\O)\rightarrow \bRT_{0,\nu}$
and $I^{bdm}_{h,\nu}: \bH_{\nu}(\divvr;\O)\rightarrow \bBDM_{0,\nu}$ as follow:
\beq\label{Irtbdm}
I^{rt}_{h,\nu} \btau := I^{rt}_{h}\btau - \phi(I^{rt}_{h}\btau,\nu)  I \mbox{ and }
I^{bdm}_{h,\nu} \btau := I^{bdm}_{h}\btau - \phi(I^{bdm}_{h}\btau,\nu)  I
 \mbox{ with  }\phi(\btau,\nu) = \frac1{d(1,\nu^{-1})} (\tr \btau,\nu^{-1}).
\eeq
It is easy to check that $I^{rt}_{h,\nu} \btau  \in \bRT_{0,\nu}$ and $I^{bdm}_{h,\nu} \btau  \in \bBDM_{0,\nu}$ due to
$$
(\tr (I^{rt}_{h,\nu}\btau),\nu^{-1}) = (\tr (I^{rt}_{h,\nu}\btau-\phi(I^{rt}_{h}\btau,\nu) I ),\nu^{-1}) 
= (\tr (I^{rt}_{h}\btau), \nu^{-1}) -  \dfrac{d(1,\nu^{-1})}{d(1,\nu^{-1})} (\tr (I^{rt}_{h}\btau),\nu^{-1})= 0.
$$

For $\btau  \in \bH_{\nu}(\divvr;\O)$,  by \eqref{RT_inter2}, if we assume that $\gradt \btau|_K \in H^{t_K}(K)$, $0<t_K\leq 1$, then
\beq\label{ine_interPi_div}
\|\nu^{-1/2}\gradt(\btau-I^{rt}_{h,\nu}  \btau)\|_{0,K} = \|\nu^{-1/2}\gradt(\btau-I^{rt}_{h} \btau)\|_{0,K}
\leq C \nu_K^{-1/2}h^{t_K}_K|\gradt\btau|_{t_K,K},  \quad \forall K\in \cT.
\eeq
For $\btau  \in \bH_{\nu}(\divvr;\O)$, we have $(\tr \btau,\nu^{-1})=0$, thus  $\phi(\btau,\nu)=0$. 
For the term $\|\nu^{-1/2}(\btau- I^{rt}_{h,\nu}  \btau)\|_0$, we then have
\begin{eqnarray*}
\|\nu^{-1/2}(\btau- I^{rt}_{h,\nu}  \btau)\|_0
&=& \|\nu^{-1/2}(\btau - I^{rt}_{h} \btau + \phi(I^{rt}_{h}\btau,\nu)  I)\|_0 = \|\nu^{-1/2}(\btau - I^{rt}_{h} \btau + \phi(I^{rt}_{h}\btau-\btau,\nu)  I)\|_0\\
&\leq& \|\nu^{-1/2}(\btau-\Pi_{rt}\btau)\|_0
+ 
\frac1{(1,\nu^{-1})} \|\nu^{-1/2}(\tr ( I^{rt}_{h} \btau - \btau),\nu^{-1}) \|_0.
\end{eqnarray*}
We estimate of the second term first. We have
\begin{eqnarray*}
\|\nu^{-1/2}(\tr ( I^{rt}_{h} \btau - \btau),\nu^{-1}) \|_0 &\leq &  \|\nu^{-1/2}\|_0 |(\tr ( I^{rt}_{h} \btau - \btau),\nu^{-1})|\\
&=&   \|\nu^{-1/2}\|_0 |(\nu^{-1/2} \tr(I^{rt}_{h} \btau - \btau),\nu^{-1/2})| \leq \|\nu^{-1/2}\|_0^2  \|\nu^{-1/2} (\btau -I^{rt}_{h}\btau)\|_0.
\end{eqnarray*}
Then it is true that for $\btau  \in \bH_{\nu}(\divvr;\O)$,
\beq\label{interbdmPi_0}
\|\nu^{-1/2}(\btau-I^{rt}_{h,\nu}\btau)\|_0 \leq 2\|\nu^{-1/2}(\btau-I^{rt}_{h}\btau)\|_0
\leq 2C_{rt} \sum_{K\in\cT}\nu_K^{-1/2}h^{s_K}_K|\btau|_{s_K,K}.
\eeq

Similarly, assume that $\btau \in H^2(\O)^d\cap  \bH_{\nu}(\divvr;\O)$, we have
\beq \label{BDMPi_inter1}
\|\nu^{-1/2}(\btau-I^{bdm}_{h,\nu}\btau)\|_0 \leq 2C_{bdm} \sum_{K\in\cT}\nu_K^{-1/2}h^{2}_K|\btau|_{2,K}.
\eeq
If we further assuming that $\gradt \btau|_K \in H^{t_K}(K)$, $0<t_K\leq 1$, then
\beq \label{BDMPi_inter2}
\|\nu^{-1/2} \gradt (\btau- I^{bdm}_{h,\nu}\btau)\|_{0,K} \leq C \nu_K^{-1/2} h_K^{t_K}|\gradt\btau|_{t_K,K} \quad \forall K\in \cT.
\eeq

\section{Stokes Interface Problems: Some Equivalent Formulations} \setcounter{equation}{0}
In this section, we discuss some physically meaningful equivalent formulations for the Stokes interface problems.
%
For a $\bv \in H^1(\O)^d$, let $\nabla \bv$ be its gradient tensor. Define the symmetric part of $\nabla \bv$ by $\beps(\bv) = \frac{1}{2}(\nabla \bv + ( \nabla \bv )^t)$. 
Let $p$ be the pressure, and  $\beps(\bu) = \frac{1}{2}(\nabla \bu + ( \nabla \bu )^t)$ the velocity deformation tensor, also called the strain tensor, where $\nabla \bu$ is the velocity gradient tensor. Let $\bsigma$ be the stress tensor, then the constitutive law for incompressible linear Newtonian fluids is
$ \bsigma - \nu\beps(\bu) + pI  = 0$  and  $\gradt \bu=0$.
In this paper, we only consider the stationary problem, thus, the linear momentum equation is $\gradt\bsigma     = -\bff$.
For simplicity, we assume that the homogeneous boundary Dirichlet condition: $\bu = 0$ on  $\p\O$.
Thus we have the following stress-velocity-pressure formulation with discontinuous coefficient $\nu$, $\bu\in \bH^1_0(\O)$ satisfying
\begin{equation}
\label{stress_velocity_pressure}
\left\{
\begin{array}{lllll}
\gradt\bsigma    & =& -\bff & \mbox{in } \O,
 \\[1mm]
 \bsigma - \nu\beps(\bu) + pI & =& 0 & \mbox{in } \O,
 \\[1mm]
 \gradt \bu &=& 0 & \mbox{in } \O.
\end{array}
\right.
\end{equation}
Substitute the second equation of \eqref{stress_velocity_pressure} into the first equation to eliminate $\bsigma$, we have the velocity-pressure equation, $\bu\in \bH^1_0(\O)$ satisfying
\beq
\label{velocity_pressure}
\left\{
\begin{array}{lllll}
- \gradt(\nu \beps(\bu)) +\nabla p &=& \bff   & \mbox{in } \O,
 \\[1mm]
 \gradt \bu &=& 0 & \mbox{in } \O.
\end{array}
\right.
\eeq
The weak formulation of \eqref{velocity_pressure} is: find  $\bu\in \bH_0^1(\O)$ and $p\in L^2_{\nu}(\O)$ such that
\beq
\label{velocity_pressure_weak}
\left\{
\begin{array}{lllll}
(\nu\beps(\bu),\beps(\bv)) - (\gradt \bv, p) &=& (\bff,\bv) \quad &\forall \bv \in \bH_0^1(\O),
\\[1mm]
(\gradt \bu,q)&=&0 \quad &\forall q \in L^2_{\nu}(\O).
\end{array}
\right.
\eeq
For the strain $\beps(\bu)$, with the incompressibility assumption, its trace is zero, $\gradt \bu =0$. Thus $\tr\bsigma = -dp$. We then have $(1/\nu, \tr\bsigma) =0$ and $\bsigma \in \bH_\nu(\divvr;\O)$.
Eliminating $p$ from the second equation of \eqref{stress_velocity_pressure}, we get
$ \cA\bsigma = \nu\beps(\bu) $
where $\cA$ is a linear but not invertible map defined by 
$
\cA\bxi =\bxi - \frac1d\tr(\bxi)I$ for $\bxi\in L^2(\O)^{d\times d}$.
By the definition of $\cA,$ it is true that 
$
(\cA\bxi,\btau) = (\cA\bxi, \cA\btau) = (\bxi,\cA\btau)$,  
for $\bxi,\btau\in L^2(\O)^{d\times d}$.
Thus we have 
\beq\label{ine_cA}
 \|\cA \btau\|_0 = \sqrt{(\cA \btau,\btau)} = \sqrt{(\cA \btau,\cA\btau)} \quad\mbox{and}\quad
(\cA\bxi,\btau)\leq \|\cA\bxi\|_0 \|\cA\btau\|_0, \quad \bxi,\btau\in L^2(\O)^{d\times d}.
\eeq
We have the stress-velocity formulation with the discontinuous viscosity, $\bu\in \bH^1_0(\O)$ and $\bsigma \in \bH_\nu(\divvr;\O)$ satisfying 
\begin{equation}\label{stress-velocity}
\left\{
\begin{array}{lllll}
\gradt \bsigma    & =& -\bff & \mbox{in } \O, & \mbox{the equilibrium equation},
 \\[1mm]
\cA\bsigma  & =& \nu\beps(\bu) & \mbox{in } \O, & \mbox{the constitutive equation}.
\end{array}
\right.
\end{equation}
We will mainly use this formulation to derive our ultra-weak augmented mixed finite element methods.

\begin{rem}
It is noted that when $\nu$ is a global constant, several different but equivalent forms can be derived for the velocity-pressure formulation \eqref{velocity_pressure}:
\begin{eqnarray}
 \gradt(\nu \beps(\bu)) = \nu \Delta \bu = - \nu \curl (\curl \bu) = \nu (\nabla (\gradt \bu)-\curl (\curl \bu)).
\end{eqnarray}
But as stated in p.58 of Gunzburger \cite{Gun:89} or p.45 of Landau and Lifshitz \cite{LL:fluid}, when $\nu$ is not a constant, the viscosity $\nu$ can not be taken out of the differential operator and only $\gradt(\nu \beps(\bu))$ is the meaningful choice since it is derived from the principle of conservation of the linear momentum $\gradt\bsigma     = -\bff$  and the Newton-Poisson constitutive equation $\bsigma - \nu\beps(\bu) + pI=0$.
\end{rem}

\begin{rem} {\bf Surface tension force on interface.}
Let $\Gamma_{ij}$ be the interface of $\O_i$ and $\O_j$, with $i<j$, and let $\bn_{ij}$ be the unit normal to $\Gamma_{ij}$, outward-directed with respect to $\O_i$. We define $\jump{v}|_{\Gamma_{ij}} = v|_{\O_i}-v|_{\O_j}$.
In our formulation, we take a simple model assuming that $\jump{\bsigma\cdot\bn_{ij}}|_{\Gamma_{ij}} = 0$ along the interface of different subdomains. For the multi-phase flow \cite{GR:11}, due to the fact that there are different molecules with different attractive forces on both sides of the interface, there is a surface tension force at the interface. Thus, we need a jump condition on the stress:
\beq \label{interfacesigma}
\jump{\bsigma\cdot\bn_{ij}} = -\tau\kappa \bn_{ij} \quad \mbox{  on } \Gamma_{ij},
\eeq 
where $\kappa$ is the curvature of the interface and $\tau$ is a constant surface tension coefficient. Thus, the full stress-velocity-pressure model is, $\bu\in \bH^1_0(\O)$:
\begin{equation}
\label{stress_velocity_pressure_2}
\left\{
\begin{array}{rllll}
\gradt\bsigma_i    & =& -\bff & \mbox{in } \O_i,
 \\[1mm]
 \bsigma_i - \nu\beps(\bu) + pI & =& 0 & \mbox{in } \O_i,
 \\[1mm]
 \jump{\bn_{ij}\cdot\bsigma} &=& -\tau\kappa \bn_{ij} & \mbox{on } \Gamma_{ij},
  \\[1mm]
   \jump{\bu} &=& 0& \mbox{on } \Gamma_{ij},
   \\[1mm]
 \gradt \bu &=& 0 & \mbox{in } \O.
\end{array}
\right.
\end{equation}
Since the system \eqref{stress_velocity_pressure_2} is linear, we can always construct a symmetric $\hat{\bsigma}$ (or construct it approximately), such that
\begin{equation}
\label{stress_velocity_pressure_hat}
\left\{
\begin{array}{lllll}
\gradt\hat{\bsigma}_i    & =& 0 & \mbox{in } \O_i,
 \\[1mm]
 \hat{\bsigma_i}\cdot\bn_{ij} &=& -\tau\kappa \bn_{ij} & \mbox{on } \Gamma_{ij},
  \\[1mm]
  \hat{\bsigma}_j\cdot\bn_{ij} &=& 0 & \mbox{on } \Gamma_{ij},
\end{array}
\right.
\end{equation}
Then for the modified stress, $\tilde{\bsigma}:=\bsigma-\hat{\bsigma}$, we have the stress-velocity-pressure formulation, 
\begin{equation}
\label{stress_velocity_pressure_tilde}
\left\{
\begin{array}{rllll}
\gradt\tilde{\bsigma}    & =& -\bff & \mbox{in } \O,
 \\[1mm]
 \tilde{\bsigma} - \nu\beps(\bu) + pI & =& -\hat{\bsigma} & \mbox{in } \O,
 \\[1mm]
 \gradt \bu &=& 0 & \mbox{in } \O,
\end{array}
\right.
\end{equation}
or the stress-velocity formulation,
\begin{equation}
\label{stress_velocity_tilde}
\left\{
\begin{array}{rllll}
\gradt\tilde{\bsigma}    & =& -\bff & \mbox{in } \O,
 \\[1mm]
 \cA\tilde{\bsigma} - \nu\beps(\bu)& =& -\hat{\bsigma} & \mbox{in } \O,
 \\[1mm]
 \gradt \bu &=& 0 & \mbox{in } \O.
\end{array}
\right.
\end{equation}
The velocity-pressure weak formulation of \eqref{stress_velocity_pressure_2} is: find  $\bu\in \bH_0^1(\O)$ and $p\in L^2_{\nu}(\O)$ such that
\beq
\label{velocity_pressure_weak_2}
\left\{
\begin{array}{lllll}
(\nu\beps(\bu),\beps(\bv)) - (\gradt \bv, p) &=& (\bff,\bv)  +\sum_{\mbox{all interfaces $\Gamma_{ij}$}}(\tau \kappa,\bv\cdot\bn_{ij})_{\Gamma_{ij}}\quad \forall \bv \in \bH_0^1(\O),
\\[2mm]
(\gradt \bu,q)&=&0 \quad \forall q \in L^2_{\nu}(\O).
\end{array}
\right.
\eeq
Note that the lefthand side is the same as that of \eqref{velocity_pressure_weak}.
Similarly, the jump condition will only affect the righthand side of the corresponding variational problem of different formulations, which is not going to be a problem of the robust a priori error analysis. Thus, we stick to the simple problem without surface force in this paper.
\end{rem}

\begin{rem}
In \cite{CLW:04,CTVW:10}, the pseudostress-velocity formulation is introduced for the Stokes equation. The main advantage of the pseudostress-velocity formulation is that the symmetric condition of the stress is not required. Unfortunately, it cannot be applied to the discontinuous viscosity case here. The pseudostress is defined as $\bsigma_{ps} = \nu \nabla \bu -pI = \bsigma - \nu (\nabla \bu)^t$. For a constant $\nu$, we have $\gradt ((\nabla \bu)^t) =0$, thus, $\gradt\bsigma_{ps} = \gradt\bsigma = -\bff$. On the other hand, for a piecewise constant function $\nu$,  we cannot take out $\nu$, and thus $\gradt (\nu(\nabla \bu)^t) \neq 0$. The constitutive law  $\gradt\bsigma_{ps} = -\bff$ does not hold. Another reason that the pseudostress formulation cannot be used is the interface condition of $\bsigma$ \eqref{interfacesigma} cannot be changed to the pseudostress.
\end{rem}

\section{On the right energy norm} \setcounter{equation}{0}
To drive the robust analysis, we need to choose the right $\nu$-dependent norm. 
For the Stokes problem in the velocity-pressure form \eqref{velocity_pressure_weak}, let $V_{d0}$ be the divergence free space:
$
V_{d0} :=\{\bv\in \bH_0^1(\O): \gradt\bv =0\}. 
$
In $V_{d0}$, the weak problem is simplified as: find $\bu \in V_{d0}$, such that
\beq \label{epsbudiv0}
(\nu\beps(\bu),\beps(\bv)) = (\bff,\bv) \quad \forall \bv \in V_{d0},
\eeq
The corresponding energy minimization problem is: find $\bu \in V_{d0}$, such that
$$
L(\bu) = \min_{\bv\in V_{d0}} L(\bv) \quad \mbox{with the energy functional defined as } L(\bv):= \frac12\|\nu^{1/2}\beps (\bv)\|_0^2-(\bff,\bv)\quad \forall \bv\in V_{d0}.
$$
Thus, the natural energy norm we should use for the Stokes interface problem is 
$\|\nu^{1/2}\beps(\bv)\|_0$. 
Similar to the situation of the diffusion problem, we have the following first Korn inequality (see p.318 of \cite{BrSc:08}): $\|\beps(\bv)\|_0 \geq C \|\nabla \bv\|_0$, for $\bv \in \bH_0^1(\O)$. We do not have a $\nu$-robust version of the first Korn inequality for a discontinuous $\nu$. 

To get the right norm of $\bsigma$, we notice that $\nu^{-1/2}\cA \bsigma = \nu^{1/2}\beps( \bu)$, thus the appropriate energy norm for $\btau\in \bH_\nu(\divvr;\O)$ is $\|\nu^{-1/2}\cA\btau\|_0$. But $\|\nu^{-1/2}\cA\btau\|_0$ alone is not a norm of $\bH_\nu(\divvr;\O)$. Luckily, we have the following inequality for some $C>0$, see \cite{CTVW:10,BBF:13},
\beq \label{cAineq}
C\|\btau\|_0 \leq \|\cA \btau\|_0 + \|\gradt\btau\|_0 \quad \forall \btau \in \bH_{\nu}(\divvr;\O).
\eeq
With the help of \eqref{cAineq},  we need to add some divergence part to $\|\nu^{-1/2}\cA\btau\|_0$ to get a norm for $\btau \in \bH_\nu(\divvr;\O)$. Thus, we choose $\nu$-weighted $H(\divvr)$-norm $(\|\nu^{-1/2}\cA\btau\|_0^2+ \|\nu^{-1/2}\gradt\btau\|_0^2)^{1/2}$ or a mesh and $\nu$-weighted $H(\divvr)$-norm $(\|\nu^{-1/2}\cA\btau\|_0^2+ \|\nu^{-1/2}h\gradt\btau\|_0^2)^{1/2}$. It is important to notice that even though $(\|\nu^{-1/2}\cA\btau\|_0^2+ \|\nu^{-1/2}\gradt\btau\|_0^2)^{1/2}$ or $(\|\nu^{-1/2}\cA\btau\|_0^2+ \|\nu^{-1/2}h\gradt\btau\|_0^2)^{1/2}$ are norms for $\bH_\nu(\divvr;\O)$, the equivalence of them with the standard $H(\divvr)$-norm may depend on $\nu$ or the mesh-size.

With the above discussion, we use the following energy norm in our analysis:
\beq \label{norm_eg}
\tri(\btau,\bv)\tri_{\eg,\theta} := (\|\nu^{1/2}\eps(\bv)\|^2_0 + \|\nu^{-1/2}\cA\btau\|_0^2 +  \|\sqrt{\theta/\nu}\gradt\btau\|^2_0)^{1/2},
\quad \forall (\btau,\bv) \in \bH_\nu(\divvr;\O) \times \bH_0^1(\O).
\eeq
Two cases of $\theta$ will be used, one is $\theta=1$ and the other case is that $\theta$ is a mesh-size function such that $\theta|_K = h_K^2$, for $K\in\cT$.
We will also use the full norm
\beq \label{norm_full}
\tri(\btau,\bv)\tri_{\full,\theta} := (\|\nu^{1/2}\nabla \bv\|^2_0+\|\sqrt{\nu/\theta} \bv\|^2_0 + \|\nu^{-1/2}\btau\|_0^2 +  \|\sqrt{\theta/\nu}\gradt\btau\|^2_0)^{1/2},
\quad \forall (\btau,\bv) \in \bH_\nu(\divvr;\O)\times \bH_0^1(\O).
\eeq

Based on the above discussion, one way to seek the robustness with respect to $\nu$ is using divergence-free finite element subspace of $V_{d0}$ and the formulation \eqref{epsbudiv0}, then the method is automatically $\nu$-robust. For a discussion of these finite elements, one can read the review paper \cite{John:17}. But some inf-sup stabilities are also needed for these divergence-free finite elements, thus careful analysis is still needed to ensure the methods are $\nu$-robust.  


Another choice is using stabilized methods. For a review of possible stabilized methods, one can read the corresponding Chapter in \cite{BBF:13}. In this paper, we use the augmented mixed methods as our method of choice.

\begin{rem} The situation is very similar to the diffusion equation with discontinuous coefficients. Consider 
\begin{equation}\label{scalar}
	-\nabla\cdot \,(\a(x)\nabla\, u) = f
 	\quad \mbox{in} \,\,\Omega
\end{equation}
with homogeneous Dirichlet boundary conditions (for simplicity) $u=0$ on $\p \O$; $f \in L^{2}(\O)$ is a given function; and diffusion coefficient $\a(x)$ is positive and piecewise constant with possible large jumps across subdomain boundaries (interfaces): $\a(x)=\a_i > 0$ in $\O_i$ for $i=1,\,...,\,n$. 
It is very natural that the norm for the robust analysis is 
\beq
\|\a^{1/2}\nabla v\|_0 \quad v\in H^1_0(\O).
\eeq
With this norm, we have the following robust coercivity (with a constant $1$) and continuity (with a constant $1$) :
$(\a\nabla v,\nabla v)  = \|\a^{1/2}\nabla v\|_0^2$  and $(\a\nabla w,\nabla v)  \leq \|\a^{1/2}\nabla w\|_0\|\a^{1/2}\nabla v\|_0 $, for $w,v\in H^1_0(\O)$. 
Let $V_c\subset H^1_0(\O)$ be a finite dimensional conforming approximation space and let $u_c \in V_c\subset H^1_0(\O)$ be the numerical solution of the problem $(\a\nabla u_c,\nabla v_c) = (f,v_c)$, for all $v_c\in V_c$. We have the robust best approximation result with respect to the energy norm: $\|\a^{1/2}\nabla(u-u_c)\|_0 = \inf_{v_c\in V_c} \|\a^{1/2}\nabla(u-v_c)\|_0$.
It is important to notice that the norm here we use is $\|\a^{1/2}\nabla v\|_0$, which is a semi-norm in $H^1(\O)$. It is a norm in $H^1_0(\O)$ (or any subspace of $H^1(\O)$ with a positive-measure homogeneous Dirichlet boundary $\Gamma_D$) because of the Poincar\'e-Friedrichs inequality: $\|v\|_0 \leq C\|\nabla v\|_0$ for $v\in H^1_0(\O)$. In most cases, the robust Poincar\'e-Friedrichs inequality
\beq
\|\a^{1/2}v\|_0 \leq C\|\a^{1/2} \nabla v\|_0 \quad v\in H^1_0(\O),
\eeq
with a constant $C$ independent of $\a$ is not true. Thus, only $\|\a^{1/2} \nabla v\|_0$ can be used, not its full $H^1$-version $(\|\a^{1/2}\nabla v\|_0^2 + \|\a^{1/2} v\|_0^2)^{1/2}$.

For the mixed method of \eqref{scalar}, let $\bsigma_m = -\a(x)\nabla u$, 
the mixed variational formulation is to find
$(\bsigma_m,\,u)\in H(\divvr;\O)\times L^2(\O)$ such that
\begin{equation}\label{mixed}
	\left\{\begin{array}{lclll}
 		(\a^{-1}\bsigma_m,\,\btau)-(\divv \btau,\, u)&=&0 \quad & \forall\,\, \btau \in H(\divvr;\O),\\[2mm]
		(\divv \bsigma_m, \,v) &=& (f,\,v)&\forall \,\, v\in L^2(\O).
	\end{array}\right.
\end{equation}
The energy norm of choice is $\|\a^{-1/2}\btau\|_0$, for $\btau \in H(\divvr;\O)$. The approximation of the divergence of $\bsigma_m$ is less important, since we already know its accurate value, which is $-f$. If an $H(\divvr;\O)$-type of norm is wanted, we can use the standard $\a$-weighted $H(\divvr)$-norm $\sqrt{\|\a^{-1/2}\btau\|_0^2+ \|\a^{-1/2}\gradt\btau\|_0^2}$ or a mesh and $\a$-weighted $H(\divvr)$-norm $\sqrt{\|\a^{-1/2}\btau\|_0^2+ \|\a^{-1/2}h\gradt\btau\|_0^2}$, see discussions in \cite{Zhang:20mixed}.

The above discussion suggests that our definition of energy norm in \eqref{norm_eg} is appropriate.
\end{rem}

\section{The augmented mixed formulations} \setcounter{equation}{0}

In this section, we present augmented mixed formulations with ultra-weak symmetry and robust coercivity and continuity.
\subsection{Ultra-weak mixed formulation}
Testing the second equation of \eqref{stress-velocity} with $\nu^{-1}\btau$ for $\btau\in \bH_\nu(\divvr;\O)$, we have
\beq \label{aug1}
(\nu^{-1}\cA\bsigma,\btau) - (\beps(\bu),\btau) = 0, \quad \forall \btau\in \bH_\nu(\divvr;\O).
\eeq
Define the 
$\o(\bbz) := (\nabla \bbz - (\nabla \bbz)^t)/2$ which is the anti-symmetry part of $\nabla \bbz$ for $\bbz\in H^1(\O)^d$.
Then it is true
$$
\nabla \bv = \beps(\bv) + \o(\bv).
$$
Using the integration by parts, we get
$$
(\beps(\bv),\btau) + (\o(\bv),\btau) + (\bv,\gradt\btau)  = (\nabla \bv,\btau) + (\bv,\gradt\btau) = 0,
\quad \btau \in \bH_{\nu}(\divvr;\O),\bv\in \bH_0^1(\O).
$$
Since the exact stress $\bsigma$ is symmetric, $\bsigma=\bsigma^t$, then
$$
(\bsigma, \o(\bv)) = \dfrac{1}{2}(\bsigma, \nabla \bv -( \nabla \bv )^t ) = 
\dfrac{1}{2}(\bsigma, \nabla \bv) -(\bsigma^t, (\nabla \bv )^t ) =0, \quad \forall \bv\in H^1(\O)^d.
$$
From the momentum equation, we have
$(\gradt\bsigma,\bv) = - (\bff,\bv)$, for all $\bv\in L^2(\O)^d$. 
Thus, we have
\beq \label{aug2}
(\gradt\bsigma,\bv) + (\bsigma, \o(\bv)) = - (\bff,\bv),\quad \bv\in \bH_0^1(\O).
\eeq
Combining \eqref{aug1} and \eqref{aug2}, we have the following ultra-weak mixed formulation:
find $(\bsigma,\bu)\in \bH_{\nu}(\divvr;\O)\times \bH_0^1(\O)$, such that
\begin{equation}\label{ultraweak}
\left\{
\begin{array}{lllll}
(\nu^{-1}\cA\bsigma,\btau) +b(\btau,\bu)  &=& 0,& \quad \forall \btau\in \bH_{\nu}(\divvr;\O),
 \\[2mm]
b(\bsigma,\bv) &=& - (\bff,\bv),&\quad \forall\bv\in \bH_0^1(\O),
\end{array}
\right.
\end{equation}
where the bilinear form $b$ has two equivalent formulations:
\beq
b(\btau,\bv): = (\gradt\btau,\bv) + (\btau, \o(\bv)) = - (\beps(\bv),\btau) \quad \forall \btau \in  \bH_{\nu}(\divvr;\O), \bv \in \bH_0^1(\O).
\eeq
Or, in a combined form, we have: find $(\bsigma,\bu)\in \bH_{\nu}(\divvr;\O)\times \bH_0^1(\O)$, such that
\beq \label{unltraweak_combined}
(\nu^{-1}\cA\bsigma,\btau) +b(\btau,\bu) - b(\bsigma,\bv) =  (\bff,\bv) \quad \forall (\btau,\bv) \in \bH_{\nu}(\divvr;\O)\times \bH_0^1(\O),
\eeq
or its symmetric variant
\beq \label{unltraweak_combined_sym}
(\nu^{-1}\cA\bsigma,\btau) +b(\btau,\bu) + b(\bsigma,\bv) =  -(\bff,\bv), \quad \forall (\btau,\bv) \in \bH_{\nu}(\divvr;\O)\times \bH_0^1(\O).
\eeq

\subsection{Augmented mixed formulation}
We propose two equivalent augmented mixed formulations based on the ultra-weak formulation, one symmetric and one non-symmetric.

Adding two consistent least-squares terms to \eqref{unltraweak_combined}:
\begin{eqnarray}\label{eq_aurterm1}
-\k(\nu^{-1}\cA\bsigma-\beps(\bu) , \cA\btau + \nu\beps(\bv)) &=& 0
~\mbox{ from the constitutive equation}, \\
\label{eq_augterm2}
\mbox{and  }
(\theta \nu^{-1}\gradt\bsigma,\gradt\btau) &=& -(\theta \nu^{-1}\bff,\gradt\btau)
~\mbox{ from the equilibrium equation},
\end{eqnarray}
we obtain the following problem: find $(\bsigma,\bu) \in \bH_{\nu}(\divvr;\O)\times \bH_0^1(\O)$, such that 
\beq\label{eq_bb}
B((\bsigma,\bu),(\btau,\bv))
= (\bff,\bv) - (\theta\nu^{-1}\bff,\gradt\btau)\quad \forall(\btau,\bv)\in \bH_{\nu}(\divvr;\O)\times \bH_0^1(\O),
\eeq
with the bilinear form $B$ defined as follows, for $(\bchi,\bw) \in  \bH_{\nu}(\divvr;\O)\times \bH_0^1(\O)$ and $(\btau,\bv) \in  \bH_{\nu}(\divvr;\O)\times \bH_0^1(\O)$,
\beq\notag
B((\bchi,\bw),(\btau,\bv)) :=(\nu^{-1}\cA\bchi,\btau) +b(\btau,\bw) - b(\bchi,\bv)-\k(\nu^{-1}\cA\bchi-\beps(\bw) , \cA\btau + \nu\beps(\bv)) + (\theta\nu^{-1}\gradt\bchi,\gradt\btau).
\eeq
Let $(\bchi,\bw) = (\btau,\bv)$ in $B((\bchi,\bw),(\btau,\bv))$ and use the fact $(\cA\btau,\btau)=(\cA\btau,\cA\btau)$, we get
\beq\notag
B((\btau,\bv),(\btau,\bv)) = (1-\kappa)\|\nu^{-1/2}\cA\btau\|^2_0 +\|\theta\nu^{-1/2}\gradt\btau\|^2_0+\k\|\nu^{1/2}\beps(\bv)\|_0^2.
\eeq
The number $1-\kappa$ should be positive to ensure coercivity. We set $\kappa=1/2$ for simplicity. Then \eqref{eq_bb} can be written as: find $(\bsigma,\bu) \in \bH_{\nu}(\divvr;\O)\times \bH_0^1(\O)
$, such that 
\beq\label{eq_varfrom}
B_\theta((\bsigma,\bu),(\btau,\bv)) = F_\theta(\btau,\bv)\quad\forall (\btau,\bv)\in \bH_{\nu}(\divvr;\O)\times \bH_0^1(\O),
\eeq
where, for $(\bchi,\bw) \in \bH_{\nu}(\divvr;\O)\times \bH_0^1(\O)$ and $(\btau,\bv) \in \bH_{\nu}(\divvr;\O)\times \bH_0^1(\O)$, the bilinear form $B_\theta$ and the linear form $F_\theta$ are defined as follows:
\begin{eqnarray*}
B_\theta((\bchi,\bw),(\btau,\bv))&:=&(\nu^{-1}\cA\bchi,\btau) + 2b(\btau,\bw) - 2b(\bchi,\bv) - (\cA\bchi, \beps(\bv))+ (\cA\btau, \beps(\bw)) \\
&&
+(\nu \beps(\bw),\beps(\bv))  + (\theta\nu^{-1}\gradt\bchi,\gradt\btau)
\\ 
&=&(\nu^{-1}\cA\bchi-\beps(\bw), \cA\btau-\nu\beps(\bv))- \dfrac{2}{d}(\gradt\bw, \tr \btau)
+ 2(\bchi,\beps(\bv))+ (\theta\nu^{-1}\gradt\bchi,\gradt\btau), \\ 
F_\theta(\btau,v) &=& 2(\bff,\bv)-(\theta\nu^{-1}\bff,\gradt\btau).
\end{eqnarray*}
Note that $B_\theta$ is not symmetric. We will give an equivalent symmetric version in Section \ref{sym_form}. Also, we notice that $B_\theta((\bchi,\bw),(\btau,\bv))$ is the standard least-squares bilinear form added with two extra terms $- \dfrac{2}{d}(\gradt\bw, \tr \btau)$ and $2(\bchi,\beps(\bv))$.
 We will discuss two choices of $\theta$, $\theta =1$ and a mesh dependent $\theta$, such that $\theta|_K = h^2_K$ for all $K\in\cT$. 

\begin{rem}
The ultra-weak mixed formulation and its augmented mixed methods presented here was originally proposed in \cite{AGR:20} for the linear elasticity and Boussinesq problem with temperature-dependent viscosity.
\end{rem}

\subsection{Some analysis for the augmented mixed formulations} \label{analysis}
Next, we discuss the coercivity and the continuity of the bilinear form $B$.
\begin{lem}{\bf (Robust Coercivity)}\label{lem_coea}
For $(\btau,\bv)\in \bH_{\nu}(\divvr;\O) \times \bH_0^1(\O)$, the following coercivity holds:
\begin{equation}\label{coercivity}
\begin{split}
B_{\theta}((\btau,\bv),(\btau,\bv)) =&\|\nu^{-1/2}\eps(\bv)\|^2_0 + \|\nu^{-1/2}\cA\btau\|_0^2 +  \|\theta\nu^{-1/2}\gradt\btau\|^2_0= \tri(\btau,v)\tri_{\eg,\theta}^2,
\end{split}
\end{equation}
\end{lem}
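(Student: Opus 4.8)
The plan is to evaluate $B_\theta$ on the diagonal directly. Since the asserted identity is an exact equality---coercivity constant $1$---rather than an inequality, no Korn inequality, inf-sup condition, or auxiliary bound is required, and the entire argument reduces to tracking cancellations. First I would substitute $(\bchi,\bw)=(\btau,\bv)$ into the (unexpanded) definition of $B_\theta$. The two copies of the form $b$ then cancel, since $2b(\btau,\bv)-2b(\btau,\bv)=0$, and the two stress--velocity cross terms, which carry opposite signs and coincide on the diagonal, also cancel: $-(\cA\btau,\beps(\bv))+(\cA\btau,\beps(\bv))=0$. This is precisely why the formulation was arranged with the opposite-signed pair $-(\cA\bchi,\beps(\bv))$ and $+(\cA\btau,\beps(\bw))$ together with $\pm 2b$; on the diagonal everything except the three definite terms annihilates.

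The only step requiring a genuine identity rather than sign-tracking is the term $(\nu^{-1}\cA\btau,\btau)$. Here I would invoke the projector property recorded in \eqref{ine_cA}, namely $(\cA\bxi,\btau)=(\cA\bxi,\cA\btau)$, which holds pointwise because $\cA\btau:\btau=\cA\btau:\cA\btau$. Since $\nu^{-1}$ enters only as a positive scalar weight, the identity survives the weighted $L^2$ inner product and yields $(\nu^{-1}\cA\btau,\btau)=\|\nu^{-1/2}\cA\btau\|_0^2$. The two remaining surviving terms are already weighted squared norms, $(\nu\beps(\bv),\beps(\bv))=\|\nu^{1/2}\beps(\bv)\|_0^2$ and $(\theta\nu^{-1}\gradt\btau,\gradt\btau)=\|\sqrt{\theta/\nu}\,\gradt\btau\|_0^2$. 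Adding the three pieces reproduces the definition \eqref{norm_eg} of $\tri(\btau,\bv)\tri_{\eg,\theta}^2$ exactly.

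I do not anticipate a real obstacle. The only points demanding care are bookkeeping: confirming that the signs on the paired $b$-terms and on the paired $(\cA\btau,\beps(\bv))$ cross terms are genuinely opposite so they vanish, and checking that the $\cA$-identity is unaffected by the $\nu^{-1}$ weighting (it is, because $\nu$ is a positive scalar field and $\cA\btau:\btau=\cA\btau:\cA\btau$ holds pointwise). The $\nu$-robustness that makes this lemma the cornerstone of the error analysis is automatic here precisely because in the diagonal evaluation $\nu$ never has to be moved across a differential operator: it appears throughout as a commuting scalar weight, so the coercivity constant is $1$, independent of the jumps in $\nu$.
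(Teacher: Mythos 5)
Your proposal is correct and follows exactly the route the paper intends: the paper's proof is the one-line remark that the identity ``follows immediately from the definition of $B_\theta$,'' and your computation simply spells out the cancellations of the paired $b$-terms and cross terms together with the pointwise projector identity $\cA\btau:\btau=\cA\btau:\cA\btau$ under the scalar weight $\nu^{-1}$. The only thing worth flagging is that the lemma as printed contains typos ($\nu^{-1/2}\eps(\bv)$ and $\theta\nu^{-1/2}\gradt\btau$ instead of $\nu^{1/2}\eps(\bv)$ and $\sqrt{\theta/\nu}\,\gradt\btau$); your version matches the correct norm \eqref{norm_eg}.
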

\begin{proof}
The coercivity \eqref{coercivity} follows immediately from the definition of the bilinear form $B_\theta$.
\end{proof}

\begin{lem}{\bf (Robust Continuity)}\label{lem_con}
 For $(\bchi,\bw),(\btau,\bv)\in \bH_{\nu}(\divvr;\O) \times \bH_0^1(\O)$, the following inequality is true:
\beq\label{ine_quasicon}
B_\theta((\bchi,\bw),(\btau,\bv))\leq 2\tri(\bchi,\bw)\tri_{\eg,\theta} \tri(\btau,\bv)\tri_{\full,\theta}.
\eeq
\end{lem}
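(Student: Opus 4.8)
The plan is to prove \eqref{ine_quasicon} by estimating $B_\theta$ directly with the Cauchy--Schwarz inequality, starting from the compact form
\[
B_\theta((\bchi,\bw),(\btau,\bv)) = (\nu^{-1}\cA\bchi-\beps(\bw),\,\cA\btau-\nu\beps(\bv)) - \tfrac{2}{d}(\gradt\bw,\tr\btau) + 2(\bchi,\beps(\bv)) + (\theta\nu^{-1}\gradt\bchi,\gradt\btau).
\]
The whole difficulty is created by the asymmetry of the two norms: every factor carrying the first argument $(\bchi,\bw)$ must be expressed through the three energy quantities $\nu^{1/2}\beps(\bw)$, $\nu^{-1/2}\cA\bchi$, $\sqrt{\theta/\nu}\,\gradt\bchi$ of \eqref{norm_eg}, whereas the factors carrying $(\btau,\bv)$ may be controlled by the more generous full-norm quantities $\nu^{1/2}\nabla\bv$, $\sqrt{\nu/\theta}\,\bv$, $\nu^{-1/2}\btau$, $\sqrt{\theta/\nu}\,\gradt\btau$ of \eqref{norm_full}.

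Two observations set the strategy. First, in the compact form $\bw$ already appears only through $\beps(\bw)$ (directly in the first term and, via $\gradt\bw=\tr\beps(\bw)$, in the equilibrium term), and $\cA\bchi$, $\gradt\bchi$ are energy quantities; the \emph{sole} obstruction is $2(\bchi,\beps(\bv))$, which exposes the full tensor $\bchi$. The energy norm controls neither $\|\nu^{-1/2}\bchi\|_0$ nor $\|\nu^{1/2}\nabla\bw\|_0$, there being no $\nu$-robust Korn inequality and no $\nu$-robust form of the equivalence \eqref{cAineq}. Second, a term-by-term bound is too crude: splitting the weight $\nu^{\pm1/2}$ and applying Cauchy--Schwarz to $(\nu^{-1}\cA\bchi-\beps(\bw),\cA\btau-\nu\beps(\bv))$ alone already produces $2\,\tri(\bchi,\bw)\tri_{\eg,\theta}\tri(\btau,\bv)\tri_{\full,\theta}$, which exhausts the whole budget. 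Hence the target constant $2$ forces me to use each of $\beps(\bw)$, $\cA\bchi$, $\gradt\bchi$ exactly once and to apply a single Cauchy--Schwarz over the resulting three groups.

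The crux is therefore to rewrite $2(\bchi,\beps(\bv))$ in energy quantities. I would integrate by parts, $2(\bchi,\beps(\bv))=-2b(\bchi,\bv)=-2(\gradt\bchi,\bv)-2(\bchi,\o(\bv))$, and then eliminate the leftover full $\bchi$ using that $\o(\bv)$ is trace free: because $(\tfrac1d(\tr\bchi)I,\o(\bv))=0$, we have $(\bchi,\o(\bv))=(\cA\bchi,\o(\bv))$. Both survivors now live in energy quantities---$\gradt\bchi$ paired with $\sqrt{\nu/\theta}\,\bv$, and $\cA\bchi$ paired with $\o(\bv)$, the latter bounded through $\|\o(\bv)\|_0\le\|\nabla\bv\|_0$.

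With $2(\bchi,\beps(\bv))$ rewritten, every contribution pairs one of the three first-argument energy quantities with a tensor built from $(\btau,\bv)$. I would collect them into the groups indexed by $\nu^{1/2}\beps(\bw)$, $\nu^{-1/2}\cA\bchi$ and $\sqrt{\theta/\nu}\,\gradt\bchi$, bound each companion factor by the full norm using $\|\cA\btau\|_0\le\|\btau\|_0$, $\|\beps(\bv)\|_0,\|\o(\bv)\|_0\le\|\nabla\bv\|_0$ and $|\tr\cdot|\le\sqrt d\,|\cdot|$, and close with a discrete Cauchy--Schwarz over the three groups. The main obstacle I expect is the constant bookkeeping: the augmentation carries several factor-two coefficients, and the elementary tensor inequalities must be arranged so that the deliberate looseness of the full norm on the right-hand side absorbs them and the accumulated constant stays at $2$.
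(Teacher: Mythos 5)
Your route is the same as the paper's. The paper also uses the two complementary bounds for $b$ --- $b(\btau,\bw)=-(\beps(\bw),\btau)\le\|\nu^{1/2}\beps(\bw)\|_0\|\nu^{-1/2}\btau\|_0$ for the term carrying $\bw$, and $b(\bchi,\bv)=(\gradt\bchi,\bv)+(\bchi,\o(\bv))$ for the term carrying $\bchi$ --- and it removes the trace part of $\bchi$ from the pairing with $\o(\bv)$ by writing $(\bchi,\o(\bv))=\frac12(\cA\bchi-(\cA\bchi)^t,\nabla\bv)$, which is exactly your observation that $(\bchi,\o(\bv))=(\cA\bchi,\o(\bv))$. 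After that, both arguments reduce to a Cauchy--Schwarz over the three groups headed by $\nu^{1/2}\beps(\bw)$, $\nu^{-1/2}\cA\bchi$ and $\sqrt{\theta/\nu}\,\gradt\bchi$, with the companions relaxed into the full norm. So every key maneuver in your plan is the right one and matches the paper.

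The gap is precisely the constant bookkeeping that you defer. Set $E_1=\|\nu^{1/2}\beps(\bw)\|_0$, $E_2=\|\nu^{-1/2}\cA\bchi\|_0$, $E_3=\|\sqrt{\theta/\nu}\gradt\bchi\|_0$ and $F_1=\|\nu^{1/2}\nabla\bv\|_0$, $F_2=\|\sqrt{\nu/\theta}\bv\|_0$, $F_3=\|\nu^{-1/2}\btau\|_0$, $F_4=\|\sqrt{\theta/\nu}\gradt\btau\|_0$. Your grouping (and the paper's) lands on
\[
B_\theta((\bchi,\bw),(\btau,\bv))\;\le\;E_1(F_1+3F_3)+E_2(3F_1+F_3)+E_3(2F_2+F_4),
\]
since $E_1$ meets $F_3$ with weight $2$ from $-\tfrac2d(\gradt\bw,\tr\btau)$ plus weight $1$ from $-(\beps(\bw),\cA\btau)$, and $E_2$ meets $F_1$ with weight $2$ from $-2(\cA\bchi,\o(\bv))$ plus weight $1$ from $-(\cA\bchi,\beps(\bv))$. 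The sharp Cauchy--Schwarz constant for this coupling is the spectral norm of the block with rows $(1,3)$ and $(3,1)$, which is $4$, not $2$: with $E_1=E_2=F_1=F_3=1$ and the rest zero, the right-hand side above equals $8$ while $2\tri(\bchi,\bw)\tri_{\eg,\theta}\tri(\btau,\bv)\tri_{\full,\theta}=4$. So this route proves the continuity with constant $4$; the paper's own final displayed step has the same defect. The larger constant is harmless downstream --- Theorem \ref{Cea_thm} only needs a $\nu$-independent bound --- but if you insist on the literal constant $2$, you need an argument that does not pass through these termwise bounds.
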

\begin{proof}
For any $\btau\in \bH_{\nu}(\divvr;\O)$ and $\bv\in \bH_0^1(\O)$, it is easy to get that
$
\cA\btau - (\cA\btau)^t = \btau - \dfrac{1}{d} \tr(\btau)I - (\btau - \dfrac{1}{d} \tr(\btau)I )^t = \btau - \btau^t.
$
Thus, we have 
\begin{eqnarray*}
(\btau,\o(\bv)) &=& \frac12(\btau, \nabla \bv)-\frac12(\btau,(\nabla \bv)^t) =  \frac12(\btau-\btau^t, \nabla \bv) = \frac12(\cA\btau-(\cA\btau)^t, \nabla \bv) \\ 
&\leq& \frac12 \|\nu^{-1/2}(\cA\btau-(\cA\btau)^t)\|_0 \|\nu^{1/2}\nabla \bv\|_0 \leq \|\nu^{-1/2}\cA\btau\|_0 \|\nu^{1/2}\nabla \bv\|_0.
\end{eqnarray*}
Then, for the bilinear form $b(\btau,\bv)$ with $(\btau,\bv)\in \bH_{\nu}(\divvr;\O)\times \bH_0^1(\O)$, we have the following two bounds.
\begin{eqnarray} \label{b_bound1}
b(\btau,\bv) &=& - (\beps(\bv),\btau) \leq \|\nu^{1/2}\beps(\bv)\|_0\|\nu^{-1/2}\btau\|_0,\\[2mm] \label{b_bound2}
b(\btau,\bv) &=&  (\gradt\btau,\bv)+(\btau,\o(\bv)) \leq \|\nu^{-1/2}\theta^{1/2}\gradt\btau\|_0\|\nu^{1/2}\theta^{-1/2}\bv\|_0+\|\nu^{-1/2} \cA\btau\|_0 \|\nu^{1/2}\nabla \bv\|_0.
\end{eqnarray}
By the  Cauchy-Schwartz inequality,  \eqref{ine_cA}, \eqref{b_bound1}, and \eqref{b_bound2}, we have
\begin{equation*}
\begin{split}
B_\theta((\bchi,\bw),(\btau,\bv))&=(\nu^{-1}\cA\bchi,\btau) + 2b(\btau,\bw) - 2b(\bchi,\bv) -(\cA\bchi, \beps(\bv))+ (\cA\btau, \beps(\bw)) \\
&
+(\nu \beps(\bw),\beps(\bv))  + (\theta\nu^{-1}\gradt\bchi,\gradt\btau)\\
\leq& \|\nu^{-1/2}\cA\bchi\|_0\|\nu^{-1/2}\cA\btau\|_0 + 2\|\nu^{1/2}\beps(\bw)\|_0\|\nu^{-1/2}\btau\|_0
+ 2 \|\nu^{-1/2}\theta^{1/2}\gradt\bchi\|_0\|\nu^{1/2}\theta^{-1/2}\bv\|_0 \\
&+2\|\nu^{-1/2} \cA\bchi\|_0 \|\nu^{1/2}\nabla \bv\|_0 + \|\nu^{-1/2}\cA\bchi\|_0 \|\nu^{1/2}\beps(\bv)\|_0 + \|\nu^{1/2}\beps(\bw)\|_0 \|\nu^{-1/2}\cA\btau\|_0  \\
&+ \|\nu^{1/2} \beps(\bw)\|_0\|\nu^{1/2}\beps(\bv)\|_0  + \|\theta^{1/2}\nu^{-1/2}\gradt\bchi\|_0 \|\theta^{1/2}\nu^{-1/2}\gradt\btau\|_0
\\
\leq& 2\tri(\bchi,\bw)\tri_{\eg,\theta} \tri(\btau,\bv)\tri_{\full,\theta}.
\end{split}
\end{equation*}
The lemma is proved.
\end{proof}

\subsection{A symmetric formulation} \label{sym_form}
The formulation \eqref{eq_varfrom} is not symmetric. In many situations, for example, eigenvalues problems or developing efficient linear solvers, symmetric formulations are always preferred. Also, we can always associate a Ritz-minimization variational principle to a symmetric problem. Luckily, the method  \eqref{eq_varfrom}  is equivalent to a symmetric GLS formulation by adding least-squares residuals
$$
-\frac12(\nu^{-1}\cA\bsigma-\beps(\bu),\cA\btau - \nu\beps(\bv)) = 0 \quad\mbox{and}\quad
\frac12(\nu^{-1}\gradt\bsigma,\gradt\btau) = -\frac12(\nu^{-1}\bff,\gradt\btau),
$$
to the  symmetric saddle point ultra-weak formulation \eqref{unltraweak_combined_sym}. We have the following symmetric formulation: find $(\bsigma, u) \in \bH_{\nu}(\divvr;\O) \times \bH_0^1(\O)$, such that
\beq \label{eq_varfrom_sym}
B_{sym,\theta}((\bsigma, \bu),(\btau,\bv))= F_{sym,\theta}(\btau,\bv), \quad \forall (\btau, \bv) \in \bH_{\nu}(\divvr;\O) \times \bH_0^1(\O),
\eeq
with the forms are defined for $(\bchi,\bw) \in \bH_{\nu}(\divvr;\O) \times \bH_0^1(\O)$ and $(\btau,\bv) \in \bH_{\nu}(\divvr;\O) \times \bH_0^1(\O)$,
\begin{eqnarray*}
B_{sym,\theta}((\bchi,\bw),(\btau,\bv))&:=&(\nu^{-1}\cA\bchi,\btau) + 2b(\btau,\bw) + 2b(\bchi,\bv) + (\cA\bchi, \beps(\bv))+ (\cA\btau, \beps(\bw)) 
\\ \nonumber
&&
-(\nu \beps(\bw),\beps(\bv))  + (\theta\nu^{-1}\gradt\bchi,\gradt\btau) \\ 
F_{sym,\theta}(\btau,v) &:=& -2(\bff,\bv)-(\theta\nu^{-1}\bff,\gradt\btau).
\end{eqnarray*}
It is easy to see that \eqref{eq_varfrom} and  \eqref{eq_varfrom_sym} are equivalent since replacing the test function $\bv$ by $-\bv$ in one formulation leads to the other formulation. By doing so, we know that \eqref{eq_varfrom} and  \eqref{eq_varfrom_sym} and their corresponding finite element formulations produce identical solutions. Thus all the analysis of the non-symmetric formulations can be applied to the symmetric versions.

Alternatively, we can establish the inf-sup stability of the symmetric formulation directly.
\begin{lem}
The following robust inf-sup stability with the stability constant being $1$ holds:
\beq \label{infsup}
\sup_{(\btau,\bv)\in \bH_{\nu}(\divvr;\O) \times \bH_0^1(\O)} \dfrac{B_{sym,\theta}((\bchi,\bw),(\btau,\bv))}{\tri (\bchi,w) \tri_{\eg,\theta}  \tri (\btau,\bv) \tri_{\eg,\theta} } \geq  \tri (\bchi,\bw) \tri_{\eg,\theta}
\quad \forall (\bchi,\bw) \in \bH_{\nu}(\divvr;\O) \times \bH_0^1(\O).
\eeq
\end{lem}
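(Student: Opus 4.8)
The plan is to prove the inf-sup bound not by estimating the supremum over the whole space, but by exhibiting for each fixed $(\bchi,\bw)$ a single test pair that already realizes the lower bound with constant $1$. The device is the sign-flip identity recorded just above the lemma: since replacing $\bv$ by $-\bv$ carries the non-symmetric form into the symmetric one, we have the term-by-term identity $B_{sym,\theta}((\bchi,\bw),(\btau,\bv)) = B_\theta((\bchi,\bw),(\btau,-\bv))$ for all arguments. Concretely, the three terms of $B_\theta$ containing the test velocity, namely $-2b(\bchi,\bv)$, $-(\cA\bchi,\beps(\bv))$ and $(\nu\beps(\bw),\beps(\bv))$, each change sign under $\bv\mapsto-\bv$ and thereby reproduce exactly the corresponding terms $2b(\bchi,\bv)$, $(\cA\bchi,\beps(\bv))$ and $-(\nu\beps(\bw),\beps(\bv))$ of $B_{sym,\theta}$, while the remaining terms $(\nu^{-1}\cA\bchi,\btau)$, $2b(\btau,\bw)$, $(\cA\btau,\beps(\bw))$ and $(\theta\nu^{-1}\gradt\bchi,\gradt\btau)$ do not involve $\bv$ and are unchanged.

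First I would take the test function $(\btau,\bv) = (\bchi,-\bw)$. Applying the identity above with the second velocity component equal to $-\bw$ gives $B_{sym,\theta}((\bchi,\bw),(\bchi,-\bw)) = B_\theta((\bchi,\bw),(\bchi,\bw))$, and the robust coercivity of Lemma \ref{lem_coea} evaluated at $(\btau,\bv)=(\bchi,\bw)$ turns the right-hand side into $\tri(\bchi,\bw)\tri_{\eg,\theta}^2$. Next I would observe that the energy norm is insensitive to the sign of its velocity argument, because $\bw$ enters $\tri(\bchi,\bw)\tri_{\eg,\theta}$ only through the strain contribution $\|\nu^{1/2}\beps(\bw)\|_0$, which is unchanged under $\bw\mapsto-\bw$; hence $\tri(\bchi,-\bw)\tri_{\eg,\theta} = \tri(\bchi,\bw)\tri_{\eg,\theta}$. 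Substituting this particular test pair into the quotient and using these two facts, the ratio simplifies to $\tri(\bchi,\bw)\tri_{\eg,\theta}$, so the supremum over all admissible $(\btau,\bv)$ is at least this value, which is precisely the asserted stability with constant $1$.

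There is no genuine obstacle in this argument: because Lemma \ref{lem_coea} already produces the full squared energy norm with coercivity constant $1$, the inf-sup constant is inherited directly, and the $\nu$-robustness is automatic since no $\nu$-dependent inequality is invoked. The only points demanding care are the term-by-term verification of the sign-flip equivalence $B_{sym,\theta}((\bchi,\bw),(\btau,\bv)) = B_\theta((\bchi,\bw),(\btau,-\bv))$, so that the coercive combination reappears exactly, and the remark that the chosen test function $(\bchi,-\bw)$ carries the same energy norm as $(\bchi,\bw)$, which guarantees that no constant is lost in passing from the coercivity identity to the inf-sup quotient.
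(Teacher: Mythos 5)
Your proposal is correct and is essentially identical to the paper's own (one-line) proof: choose the test pair $(\btau,\bv)=(\bchi,-\bw)$, use the sign-flip identity $B_{sym,\theta}((\bchi,\bw),(\bchi,-\bw))=B_{\theta}((\bchi,\bw),(\bchi,\bw))$ together with the coercivity identity \eqref{coercivity}, and note that the energy norm is invariant under $\bw\mapsto -\bw$. (The only caveat, which applies equally to the paper's statement and not to your argument, is that with the extra factor $\tri(\bchi,\bw)\tri_{\eg,\theta}$ already present in the denominator of \eqref{infsup} the quotient for this test pair evaluates to $1$ rather than to $\tri(\bchi,\bw)\tri_{\eg,\theta}$.)
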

\begin{proof} 
The lemma can be easily proved by choosing $(\btau,\bv) =  (\bchi,-\bw)$ and using the facts that $B_{\theta}((\bchi,\bw),(\bchi,\bw) = B_{sym,\theta}((\bchi,\bw),(\bchi,-\bw))$ and \eqref{coercivity}. 
\end{proof}

\section{Augmented mixed finite element approximations}
\setcounter{equation}{0}

Let $\bSigma_h \subset \bH_{\nu}(\divvr;\O)$ and $\bS_h \subset \bH_0^1(\O)$ be the finite element spaces. we have the following discrete augmented mixed finite element problem:  find $(\bsigma_h,\bu_h) \in \bSigma_h\times \bS_h$, such that 
\beq\label{eq_varfrom_dis}
B_\theta((\bsigma_h,\bu_h),(\btau_h,\bv_h)) = F_\theta(\btau_h,\bv_h)\quad\forall (\btau_h,\bv_h)\in \bSigma_h\times \bS_h.
\eeq
Due to that fact that the finite element spaces are conforming, the discrete problem \eqref{eq_varfrom_dis} is well-posed. And we have the following error equation.
\beq \label{err_eq}
B_\theta((\bsigma-\bsigma_h,\bu-\bu_h),(\btau_h,\bv_h)) = 0\quad\forall (\btau_h,\bv_h)\in \bSigma_h\times \bS_h.
\eeq
We immediately have the following robust Cea's Lemma type of result and local robust and optimal convergence results for each choices. 
\begin{thm} \label{Cea_thm}
Let $(\bsigma,\bu)$ be the solution of \eqref{eq_varfrom} and  $(\bsigma_h,\bu_h)\in \bSigma_h\times \bS_h$ be the finite element solution of \eqref{eq_varfrom_dis}, we have
\beq \label{Cea}
\tri (\bsigma-\bsigma_h,\bu-\bu_h) \tri_{\eg,\theta} \leq 2 \inf_{(\btau_h,\bv_h)\in \bSigma_h\times \bS_h} \tri (\bsigma-\btau_h,\bu-\bv_h) \tri_{\full,\theta}
\eeq
Assume that $(\bsigma_{rt,1,h},\bu_{1,1,h})\in \bRT_{0,\nu}\times S_{1,0}^d$ is the solution of problem \eqref{eq_varfrom_dis} with $\theta=1$ , and $\bSigma_h\times \bS_h =\bRT_{0,\nu}\times S_{1,0}^d$.  Assume that $\bu|_K\in H^{1+s_K}(K)^d$, $\bsigma|_K\in H^{q_K}(K)^{d\times d}$, $\bff|_K\in H^{t_K}(K)^d$ for $K\in\cT$, where the local regularity indexes $s_K$, $q_K$, and $t_K$ satisfy the following assumptions: $0<s_K\leq 1$ in two dimensions and $1/2<s_K\leq 1$ in three dimensions,  $1/2<q_K\leq 1$ with the constant $C_{rt}>0$ being unbounded as $q_K\downarrow 1/2$, and $0<t_K\leq 1$, then the following robust and local optimal a priori error estimate holds: there exists a constant $C$ independent of $\nu$ and the mesh-size, such that
\beq
\tri(\bsigma- \bsigma_{rt,1,h},\bu-\bu_{1,1,h})\tri_{\eg,1}
 \leq C\sum_{K\in\cT}\nu_K^{1/2}\left(h_K^{s_K}|\nabla \bu|_{s_K,K}+ \nu_K^{-1}(C_{rt} h_K^{q_K}|\bsigma|_{q_K,K} + h_K^{t_K}|\bff|_{t_K,K})
\right).
\eeq
Assume that $(\bsigma_{rt,h,h},\bu_{1,h,h})\in \bRT_{0,\nu}\times S_{1,0}^d$ is the solution of problem \eqref{eq_varfrom_dis} with $\theta|_K=h_K^2$, $K\in\cT$, and $\bSigma_h\times \bS_h =\bRT_{0,\nu}\times S_{1,0}^d$. 
Assume that $u|_K\in H^{1+s_K}(K)^d$, $\bsigma|_K\in H^{q_K}(K)^{d\times d}$, $\bff|_K\in H^{t_K}(K)^d$ for $K\in\cT$, where the local regularity indexes $s_K$, $q_K$, and $t_K$ satisfy the following assumptions: $0<s_K\leq 1$ in two dimensions and $1/2<s_K\leq 1$ in there dimensions,  $1/2<q_K\leq 1$ with the constant $C_{rt}>0$ being unbounded as $q_K\downarrow 1/2$, and $0<t_K\leq 1$, then the following robust and local optimal a priori error estimate holds: there exists a constant $C$ independent of $\nu$ and the mesh-size, such that
\begin{eqnarray} 
\tri(\bsigma- \bsigma_{rt,h,h},\bu-\bu_{1,h,h})\tri_{\eg,h}
 &\leq& C\sum_{K\in\cT}\nu_K^{1/2}\left(h_K^{s_K}|\nabla \bu|_{s_K,K}+ \nu_K^{-1}(C_{rt} h_K^{q_K}|\bsigma|_{q_K,K} + h_K^{1+t_K}|\bff|_{t_K,K})
\right).
\end{eqnarray}
Assume that $(\bsigma_{bdm,h,h},\bu_{2,h,h})\in \bBDM_{0,\nu}\times S_{2,0}^d$ is the solution of problem \eqref{eq_varfrom_dis} with $\theta|_K=h_K^2$ and $K\in\cT$ and $\bSigma_h\times \bS_h =\bBDM_{0,\nu}\times S_{2,0}^d$. 
Assume that $\bu\in (H^3(\O))^d$, $\bsigma|_K\in H^2(K)^{d\times d}$, $\bff|_K\in H^1(K)^d$ for $K\in\cT$, then the following robust and local optimal a priori error estimate holds: there exists a constant $C$ independent of $\nu$ and the mesh-size, such that
\begin{eqnarray} 
\tri(\bsigma- \bsigma_{bdm,h,h},\bu-\bu_{2,h,h})\tri_{\eg,h}
 &\leq& C\sum_{K\in\cT}\nu_K^{1/2}h_K^2 \left(|\nabla \bu|_{2,K}+ \nu_K^{-1}(|\bsigma|_{2,K} + |\bff|_{1,K})
\right).
\end{eqnarray}

\end{thm}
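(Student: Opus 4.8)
The plan is to prove the abstract quasi-best-approximation bound \eqref{Cea} first, by the standard coercivity--continuity--orthogonality argument, and then to specialize it to each pair of spaces by inserting the $\nu$-weighted interpolants built in Section 2 and estimating the full norm of the interpolation error term by term.

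For \eqref{Cea} I would fix an arbitrary $(\btau_h,\bv_h)\in\bSigma_h\times\bS_h$ and abbreviate the error pair by $\mathbf{e}:=(\bsigma-\bsigma_h,\bu-\bu_h)$. By the coercivity identity of Lemma \ref{lem_coea},
\begin{equation*}
\tri\mathbf{e}\tri_{\eg,\theta}^2 = B_\theta(\mathbf{e},\mathbf{e}).
\end{equation*}
Writing $\mathbf{e}=(\bsigma-\btau_h,\bu-\bv_h)-(\bsigma_h-\btau_h,\bu_h-\bv_h)$ and invoking the Galerkin orthogonality \eqref{err_eq} (legitimate since $(\bsigma_h-\btau_h,\bu_h-\bv_h)\in\bSigma_h\times\bS_h$) replaces the second argument of $B_\theta$ by the interpolation error $(\bsigma-\btau_h,\bu-\bv_h)$. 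The continuity of Lemma \ref{lem_con} then bounds the result by $2\,\tri\mathbf{e}\tri_{\eg,\theta}\,\tri(\bsigma-\btau_h,\bu-\bv_h)\tri_{\full,\theta}$; cancelling one factor $\tri\mathbf{e}\tri_{\eg,\theta}$ and taking the infimum over the discrete pair gives \eqref{Cea}. The decisive feature, already built into Lemma \ref{lem_con}, is that the two arguments of $B_\theta$ are measured in different norms, so the energy norm survives on the left while the approximation is quantified in the full norm on the right.

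For the three concrete estimates I would choose $\btau_h=I^{rt}_{h,\nu}\bsigma$ (resp.\ $I^{bdm}_{h,\nu}\bsigma$) and $\bv_h=I^{nodal}_h\bu$ (resp.\ the $S_{2,0}$ interpolant $I_h\bu$), applied componentwise, and bound the four pieces of $\tri(\bsigma-\btau_h,\bu-\bv_h)\tri_{\full,\theta}$ separately. The term $\|\nu^{1/2}\nabla(\bu-\bv_h)\|_0$ is controlled by \eqref{nodal_inter} (resp.\ \eqref{nodal_inter2}) and the term $\|\nu^{-1/2}(\bsigma-\btau_h)\|_0$ by \eqref{interbdmPi_0} (resp.\ \eqref{BDMPi_inter1}). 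The divergence term is where the equilibrium equation enters: since $\gradt\bsigma=-\bff$, the commuting-diagram estimates \eqref{ine_interPi_div} and \eqref{BDMPi_inter2} bound $\|\sqrt{\theta/\nu}\,\gradt(\bsigma-\btau_h)\|_0$ by a multiple of $|\bff|_{t_K,K}$, the weight $\sqrt{\theta}$ supplying the extra factor $h_K$ exactly when $\theta|_K=h_K^2$. The only term needing care is the zeroth-order velocity piece $\|\sqrt{\nu/\theta}\,(\bu-\bv_h)\|_0$: the $L^2$ interpolation bound (the second inequality of \eqref{nodal_inter}, and its quadratic analogue for $S_{2,0}$) makes it $O(h_K^{1+s_K})$ for $\theta=1$, hence dominated by the gradient term, while the factor $h_K^{-1}$ for $\theta|_K=h_K^2$ reduces it to $O(h_K^{s_K})$, matching the gradient term; in neither case does it degrade the rate.

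Summing the four local bounds over $K\in\cT$ reproduces the three displayed estimates, with the $\nu$-weights carried automatically by the weighted operators $I^{rt}_{h,\nu},I^{bdm}_{h,\nu}$. I expect the only genuine obstacle to have been resolved already in Section 2: these operators must land in the constrained spaces $\bRT_{0,\nu},\bBDM_{1,\nu}$ yet retain the $\nu$-robust approximation of their uncorrected counterparts, and the correction by $\phi(\cdot,\nu)I$ affects the bounds only through the harmless factor $2$ recorded in \eqref{interbdmPi_0} and \eqref{BDMPi_inter1}. Given \eqref{Cea} and these interpolation estimates, the remaining work is careful bookkeeping of the powers of $h_K$ and $\nu_K$ for the two values of $\theta$.
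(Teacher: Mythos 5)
Your proposal is correct and follows essentially the same route as the paper: coercivity plus Galerkin orthogonality to swap the second argument of $B_\theta$ for the interpolation error, the two-norm continuity bound of Lemma \ref{lem_con}, cancellation of one energy-norm factor, and then the $\nu$-weighted interpolants $I^{rt}_{h,\nu}$, $I^{bdm}_{h,\nu}$ together with \eqref{nodal_inter}--\eqref{BDMPi_inter2} to bound each piece of the full norm. Your term-by-term treatment of $\|\sqrt{\nu/\theta}(\bu-\bv_h)\|_0$ and of the divergence term via $\gradt\bsigma=-\bff$ is exactly the bookkeeping the paper leaves implicit.
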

\begin{proof}
The proof is quite standard with the help of error equation \eqref{err_eq}, robust coercivity \eqref{coercivity}, and robust continuity \eqref{ine_quasicon}.
\begin{eqnarray*}
\tri(\bsigma-\bsigma_h,\bu-\bu_h)\tri_{\eg,\theta}^2 &=& B_{\theta}((\bsigma-\bsigma_h,\bu-\bu_h),(\bsigma-\bsigma_h,\bu-\bu_h))= B_{\theta}(\bsigma-\bsigma_h,u-u_h;\bsigma-\btau_h,\bu-\bv_h)\\
&\leq&  2\tri(\bsigma-\bsigma_h,\bu-\bu_h)\tri_{\eg,\theta}\tri(\bsigma-\btau_h,\bu-\bv_h)\tri_{\full,\theta}.
\end{eqnarray*}

The individual convergence results can be derived from regularity assumptions and interpolation results \eqref{nodal_inter}, \eqref{RT_inter1}, and \eqref{RT_inter2}.
\end{proof}

\begin{rem}
Note that the divergence $\gradt\bsigma = -\bff$ is a known quantity, thus in principle we should not let the poor regularity and approximation of the divergence of $\bsigma$ to ruin the convergence order of $\bu$ and $\bsigma$.
The mesh-weighted augmented mixed formulation is useful when the regularity of $\bff$ is low for the RT case. For the BDM case, since the divergence approximation of BDM is one-order lower, this will help the formulation to achieve optimal convergence. 

\end{rem}

\section{Related Least-Squares Formulations and A Posteriori Error Analysis}
\setcounter{equation}{0}

In this section, we discuss the related least-squares formulations and  a posteriori error analysis.
For the first-order system stress-velocity formulation \eqref{stress-velocity}, the least-squares functional is
\beq \label{lsfunctional}
J_{\theta}(\btau,\bv;\bff) := \|\nu^{1/2}\beps(\bv) - \nu^{-1/2}\cA\btau\|_0^2+  \|\theta^{1/2}\nu^{-1/2}(\gradt\btau +\bff)\|_0^2 \quad (\btau,\bv)\in \bH_{\nu}(\divvr;\O) \times \bH_0^1(\O).
\eeq
Then the $L^2$-based least-squares minimization problem is: find $(\btau,\bv)\in \bH_{\nu}(\divvr;\O) \times \bH_0^1(\O)$, such that 
\beq
J_{\theta}(\bsigma,\bu;\bff) = \inf_{(\btau,\bv)\in \bH_{\nu}(\divvr;\O) \times \bH_0^1(\O)} J_{\theta}(\btau,\bv;\bff) .
\eeq 
The least-squares finite element method (LSFEM) is: find $(\bsigma_{h}^{ls},\bu_h^{ls}) \in \bSigma_h\times \bS_h$, such that
\beq \label{lsfem1}
J_{\theta}(\bsigma^{ls}_h,\bu^{ls}_h;\bff) = \inf_{(\btau,\bv)\in \in \bSigma_h\times \bS_h.} J_{\theta}(\btau,\bv;\bff).
\eeq 
For the case that $\theta=1$, the following norm-equivalence is established in \cite{CLW:04} (without $\nu$-weighting):
\beq \label{ls_equvalience}
C_{coe}(\nu) \tri(\btau,\bv)\tri_{\eg,1} ^2 \leq J_\theta(\btau,\bv;\bzero) \leq C_{con} \tri(\btau,\bv)\tri_{\eg,1} ^2, \quad \forall  (\btau,\bv) \in \bH_{\nu}(\divvr;\O) \times \bH_0^1(\O).
\eeq
The upper bound in \eqref{ls_equvalience} can be easily proved by simple triangle inequalities. The proof of the lower bound in \eqref{ls_equvalience} relies on the Korn's first inequality. Since the $\nu$-robust Korn's first inequality is not available, the constant  $C_{coe}(\nu)$ may be $\nu$-dependent. Thus there is no robust a priori error analysis of the corresponding LSFEM with $\theta=1$. 
For the case $\theta|_K = h_K^2$, for $K\in\cT$, even a $\nu$-dependent  norm equivalence similar to \eqref{ls_equvalience} is not available, thus there is no (even $\nu$-dependent) a priori error analysis of the corresponding LSFEM.

As discussed in \cite{LZ:24}, the least-squares functional $J_{\theta}$ can be used as a simple a posteriori error estimator. Let  $(\bsigma_h,\bu_h)\in \bSigma_h\times \bS_h$ be the finite element solution of \eqref{eq_varfrom_dis},  we  define the following least-squares functional a posteriori error estimator:
\beq\label{ap_lsfem}
\eta_{\theta}(\bsigma_h,\bu_h)
= \Big (\|\nu^{1/2}\beps(\bu_h) +\nu^{-1/2}\cA\bsigma_h\|_0^2+  \|\theta^{1/2}\nu^{-1/2}(\gradt\bsigma_h +\bff)\|_0^2
\Big )^{1/2} = J_\theta(\bsigma_{h},\bu_{h};\bff)^{1/2}.
\eeq
The local indicator is defined as 
\beq
\eta_{\theta,K}(\bsigma_h,\bu_h)
= \Big (\|\nu^{1/2}\beps(\bu_h) +\nu^{-1/2}\cA\bsigma_h\|_{0,K}^2+  \|\theta^{1/2}\nu^{-1/2}(\gradt\bsigma_h +\bff)\|_{0,K}^2
\Big )^{1/2}.
\eeq
Due to that \eqref{ls_equvalience} is true for any $(\btau,\bv) \in \bH_{\nu}(\divvr;\O) \times \bH_0^1(\O)$, we have the following (non-$\nu$-robust) reliability and efficiency for the case $\theta=1$.
\beq \label{esti}
C_{coe}(\nu) \tri(\bsigma-\bsigma_h,\bu-\bu_h)\tri_{\eg,1} ^2 \leq J_1(\bsigma_h,\bu_h;\bzero) = \eta_{1}(\bsigma_h,\bu_h)^2 \leq C_{con} \tri(\bsigma-\bsigma_h,\bu-\bu_h)\tri_{\eg,1} ^2.
\eeq
In \cite{LZ:24}, for the diffusion equation with discontinuous coefficient, we proved that the corresponding  least-squares functional error estimator is robust for both cases ($\theta=1$ or $\theta=h_K^2$). The situation is more complicated here since the we have two norms $\tri\cdot\tri_{\eg}$ and  $\tri\cdot\tri_{\full}$. At current stage, there is no robust analysis of these error estimators. The numerical experiments in Section 8 show that they are computationally $\nu$-robust.

Since the augmented mixed method uses the same conforming finite element spaces as the standard least-squares finite element methods, the least-squares functional error estimator can also be used for other augmented mixed methods previously developed, for example,  for Stokes equations in \cite{Gatica:06,FGM:08}.

\section{Numerical Tests}
\setcounter{equation}{0}
%

\subsection{Kellogg-type example with exact solutions}\label{kellogg}
We first construct solutions with singularity in the sprit of the Kellogg's example \cite{Kellogg:74}.
 
Consider $\overline{\O} = [-1,1]\times[-1,1]$ and $\O = \cup_{i=1}^4\O_i$ with $\O_i$ being the domain in $i$-th quadrant. We assume that $\nu_i = \nu|_{\O_i}>0$ is a constant in each subdomain. 
We construct an example for the following system with inhomogeneous boundary condition $\bu = \bg$ on $\partial\O$:
$$
\left\{
\begin{array}{lllll}
- \gradt(\nu \beps(\bu)) +\nabla p &=& \bzero  & \mbox{in } \O,
 \\[1mm]
 \gradt \bu &=& 0 & \mbox{in } \O,
\end{array}
\right.
\quad\mbox{or}\quad 
\left\{
\begin{array}{lllll}
\gradt\bsigma    & =& \bzero & \mbox{in } \O,
 \\[1mm]
 \bsigma - \nu\beps(\bu) + pI & =& 0 & \mbox{in } \O,
 \\[1mm]
 \gradt \bu &=& 0 & \mbox{in } \O.
\end{array}
\right.
$$
The inhomogeneous boundary condition $\bg$ is given by the exact solution (other types of boundary conditions can also be given with the available exact solution).

To construct a divergence-free solution, we use the Neuber-type solution \cite{TB:82}: 
$$
\bu|_{\O_i} = \grad(\bx\cdot{\bf B}) - 2{\bf B} \quad\mbox{and}\quad
p|_{\O_i} = \nu\gradt{\bf B} \quad \mbox{in  } \O_i, i=1,\cdots,4,
$$ 
where $\bx = (x,y)^t$. The vector function ${\bf B} = (B_1,B_2)^t$ is a harmonic vector function in each $\O_i$, that is,
$\Delta{\bf B}|_{\O_i} = (\Delta B_1|_{\O_i}, \Delta B_2|_{\O_i})^t ={\bf 0}$. Note that $B_1$ and $B_2$ are harmonic in each subdomain $\O_i$ but not in $\O.$   Then in each subdomain $\O_i$, we have 
$$
\gradt(\nu_i\eps(\bu) - pI) =  \gradt(\frac{\nu_i}2\grad\bu) - pI ) =0
\quad\mbox{and}\quad
\gradt\bu =0  \quad\mbox{in}\quad \O_i, \quad i=1,\cdots,4.
$$
As discussed in Chapter 13 of \cite{Zill:DE}, a choice for a harmonic $\bf B$ in the polar coordinate $(r,\vartheta)$ is
\beq \label{BB}
B_1|_{\O_i} = r^\a(\ma_i\sin(\a\vartheta) + \mb_i\cos(\a\vartheta) ) 
\quad\mbox{and}\quad B_2|_{\O_i} = r^\a(\mc_i\sin(\a\vartheta) + \md_i\cos(\a\vartheta) )
\quad i=1,\cdots,4.
\eeq
where $\a$, $\ma_i$, $\mb_i$, $\mc_i$, and $\md_i$ ($i=1,\cdots,4$) are coefficients to be determined.
Some simple computations (see detailed calculations in Appendix) give
\begin{eqnarray}
u_1|_{\O_i} &=& \a r^\a[(\ma_i\cos\vartheta + \mc_i\sin\vartheta)\sin(\a-1)\vartheta
 + (\mb_i\cos\vartheta+\md_i\sin\vartheta)\cos(\a-1)\vartheta] - B_1|_{\O_i},\\
 u_2|_{\O_i} &=& \a r^\a[(\ma_i\cos\vartheta + \mc_i\sin\vartheta)\cos(\a-1)\vartheta
 - (\mb_i\cos\vartheta+\md_i\sin\vartheta)\sin(\a-1)\vartheta] - B_2|_{\O_i},\\
 p|_{\O_i} &=& \nu_i\a r^{\a-1}[ (\ma_i-\md_i)\sin(\a-1)\vartheta + (\mb_i+\mc_i)\cos(\a-1)\vartheta ].
\end{eqnarray}
Thus, we have $\a$, $\nu_i$,  $\ma_i,\mb_i,\mc_i$ and $\md_i$ for $i=1,\cdots,4$, a total of 21 unknowns to be determined.

We need interface conditions to ensure that $\bsigma = \nu\eps(\bu) - pI \in \bH(\divvr;\O)$ and $\bu$ in $\bH^1(\O)$:
$$
\jump{( \nu\eps(\bu) - pI )\bn_{ \G_{ij} } }_{ \G_{ij} } = 0
\quad\mbox{and}\quad
\jump{\bu}_{ \G_{ij} } = 0 \quad \mbox{on}\quad \Gamma_{ij} =\O_i\cap\O_j.
$$
There are $4$ equations on each interface, thus there are 16 equations. The detailed derivation of the 16 equations \eqref{eq01}-\eqref{eq16} can be found in the appendix.

Let $\md_4=1$, $\nu_2=\nu_4=1$, and $\nu_1=\nu_3$. Then for a given $\alpha \in (0,1]$, we have 16 equations \eqref{eq01}-\eqref{eq16} and 16 unknowns ( $\nu_1$ and the rest of $\ma_i$, $\mb_i$, $\mc_i$, $\md_i$). For a series of different $\alpha$, we solve the nonlinear system by $\mathtt{fslove}$ in Matlab.
%
The following tables Tables \ref{Tab1_numbers}-\ref{Tab3_numbers} present some data of the solutions.
\begin{table}[h]
\caption{Data 1(left) $\nu_1=\nu_3 = 160.3374$ 
and $\a = 0.13.$
Data 2 (right) $\nu_1=\nu_3 = 67.1849$ and $\a = 0.2.$}
\label{Tab1_numbers}
\begin{tabular}{|c|c|c|c|c|}
\hline
  $i$  & $\ma_i$    &  $\mb_i$   &  $\mc_i$  &    $\md_i$      \\\hline
 $1$ &$0.0132$ & $0.3067$    & $-0.0482$  &  $-0.2740$   \\\hline
 $2$  &$-2.0673$  & $0.5604$   & $1.2592$  &  $-0.5447$ \\\hline
 $3$  &$-0.1340$ & $-0.2763$   & $0.1530$  &  $0.2323$   \\\hline
 $4$  &$1.6747$   & $-1.3353$    & $-0.9393$  &   $1.0$   \\\hline
\end{tabular}
\quad
\begin{tabular}{|c|c|c|c|c|}
\hline
  $i$  & $\ma_i$    &  $\mb_i$   &  $\mc_i$  &    $\md_i$      \\\hline
 $1$ &$0.0134$ & $0.2523$    & $-0.0657$  &  $-0.2527$   \\\hline
 $2$  &$-0.8757$  & $0.3244$   & $0.9149$  &  $-0.5713$ \\\hline
 $3$  &$-0.1591$ & $-0.1963$   & $0.2017$  &  $0.1658$   \\\hline
 $4$  &$0.5178$   & $-0.7772$    & $-0.4044$  &   $1.0$   \\\hline
\end{tabular}
\end{table}


\begin{table}[h]
\caption{Data 3 (left) $\nu_1=\nu_3 = 29.3162$ and $\a = 0.3.$
Data 4 (right)  $\nu_1=\nu_3 = 16.0517$ and $\a = 0.4$
}\label{Tab2_numbers}
\begin{tabular}{|c|c|c|c|c|}
\hline
  $i$  & $\ma_i$    &  $\mb_i$   &  $\mc_i$  &    $\md_i$      \\\hline
 $1$ &$0.0179$ & $0.2853$    & $-0.1169$  &  $-0.3016$   \\\hline
 $2$  &$-0.5390$  & $0.2564$   & $0.7106$  &  $-0.7233$ \\\hline
 $3$  &$-0.2414$ & $-0.1532$   & $0.3127$  &  $0.0827$   \\\hline
 $4$  &$0.1094$   & $-0.5867$    & $0.1675$  &   $1.0$   \\\hline
\end{tabular}
\quad
\begin{tabular}{|c|c|c|c|c|}
\hline
  $i$  & $\ma_i$    &  $\mb_i$   &  $\mc_i$  &    $\md_i$      \\\hline
 $1$ &$0.0434$ & $0.5249$    & $-0.2808$  &  $-0.5181$   \\\hline
 $2$  &$-0.7143$  & $0.4998$   & $0.6608$  &  $-1.2022$ \\\hline
 $3$  &$-0.5126$ & $-0.1209$   & $0.5795$  &  $-0.1070$   \\\hline
 $4$  &$-0.2546$   & $-0.8338$    & $0.9392$  &   $1.0$   \\\hline
\end{tabular}
\end{table}


\begin{table}[h]
\caption{Data 5 of the solution with  $\nu_1=\nu_3 = 9.8990$ and $\a = 0.5.$}\label{Tab3_numbers}
\begin{tabular}{|c|c|c|c|c|}
\hline
  $i$  & $\ma_i$    &  $\mb_i$   &  $\mc_i$  &    $\md_i$      \\\hline
 $1$ &$0.2364$ & $2.2978$    & $-1.4918$  &  $-2.0518$   \\\hline
 $2$  &$-2.2978$  & $2.3401$   & $1.0000$  &  $-4.5437$ \\\hline
 $3$  &$-2.2978$ & $0.2364$   & $2.0518$  &  $-1.4918$   \\\hline
 $4$  &$-2.3401$   & $-2.2978$    & $4.5437$  &   $1.0$   \\\hline
\end{tabular}
\end{table}

%
%

The exact solutions $\bu = (u_1 , u_2)^t$, with the Data1 and Data 5, are presented in Figure \ref{fig_Data1_exactu} and Figure \ref{fig_Data5_exactu}, respectively. The solutions behave like the classical Kellogg's solution \cite{Kellogg:74,CZ:09} for the diffusion problem. 
\begin{figure}[htbp]
  \centering
  \begin{minipage}[t]{0.45\linewidth}
  \includegraphics[scale=0.5]{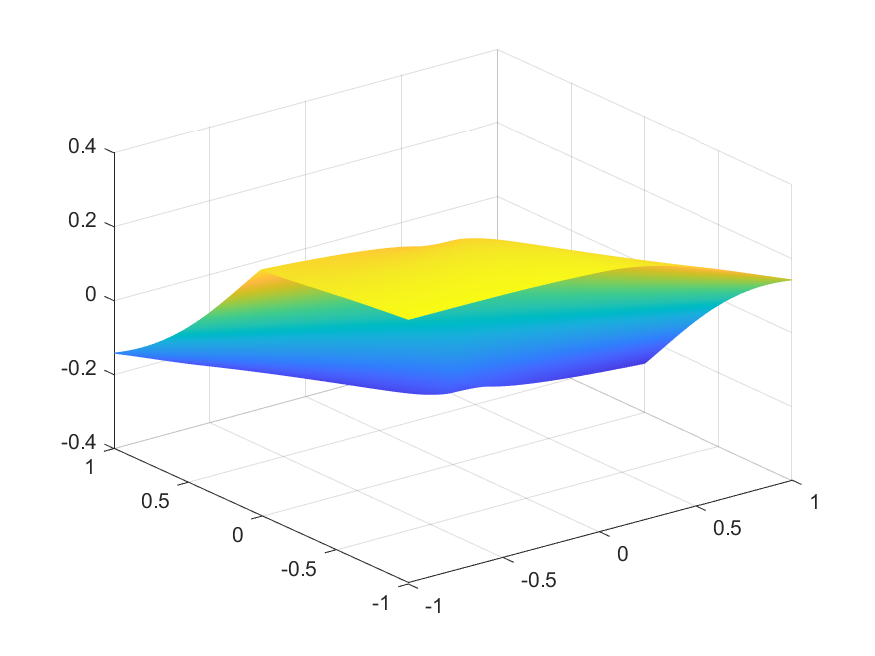}   
  \end{minipage}
    \begin{minipage}[t]{0.45\linewidth}
  \includegraphics[scale=0.5]{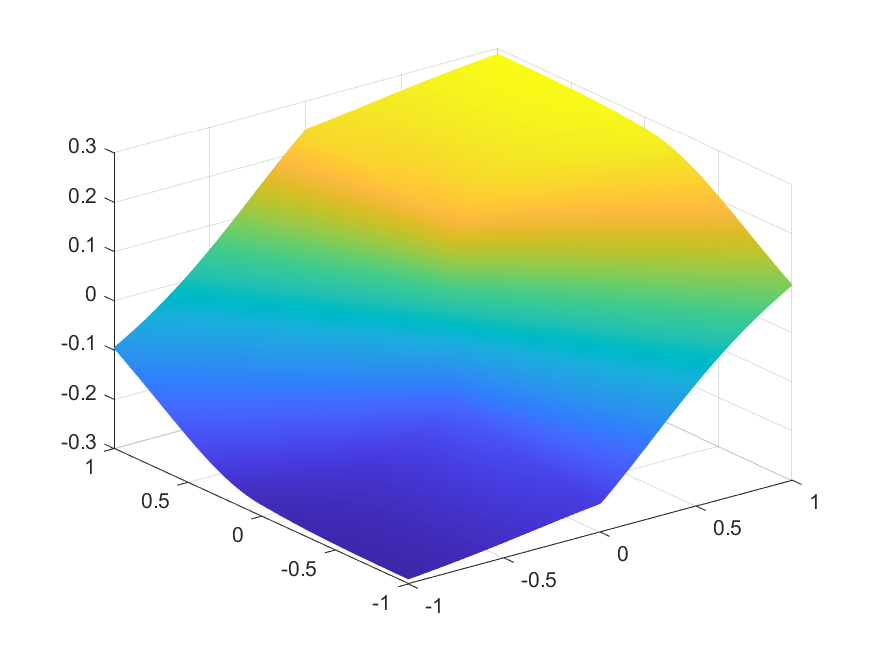}   
  \end{minipage}
  \caption{Exact solutions $u_1$ (left) and $u_2$ (right) for Data1 with $\a = 0.13$.}
    \label{fig_Data1_exactu}
\end{figure}

\begin{figure}[htbp]
  \centering
  \begin{minipage}[t]{0.45\linewidth}
  \includegraphics[scale=0.5]{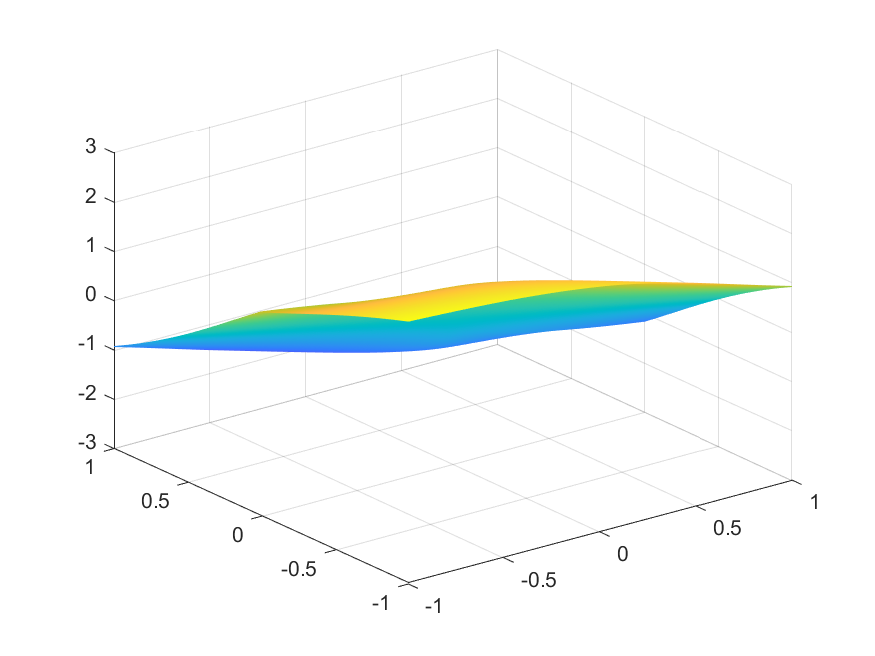}   
  \end{minipage}
    \begin{minipage}[t]{0.45\linewidth}
  \includegraphics[scale=0.5]{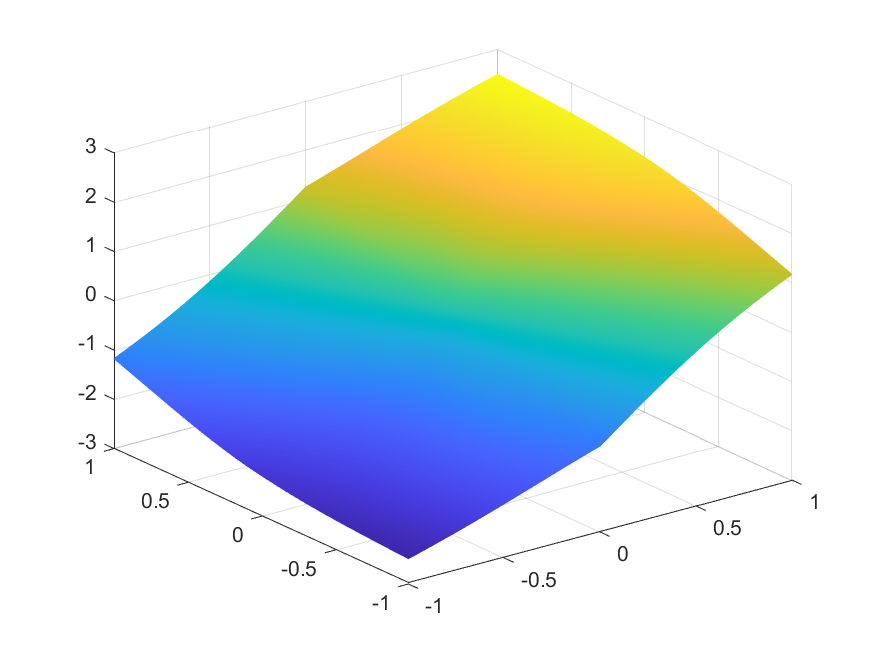}   
  \end{minipage}
  \caption{Exact solutions $u_1$ (left) and $u_2$ (right) for Data5 with $\a = 0.5$.}
    \label{fig_Data5_exactu}
\end{figure}

\subsection{Robustness of the a prior estimate}
To check the robustness of a priori error estimate, let $I^{rt}_{h,\nu}$ and $I^{bdm}_{h,\nu}$ be the interpolations defined in \eqref{Irtbdm}, and let $I^{l2}_h$ to be the $L^2$ projection from $\bH^1(\O)$ to $S_{1,0}^d$ if $\bRT_{0,\nu}\times S_{1,0}^d$ is chosen or to $S_{2,0}^d$ if $\bBDM_{1,\nu}\times S_{2,0}^d$ is used in the test, respectively.

The index in the a prior estimate is computed by 
\beq\notag
\mbox{ind-err} := \frac{\tri (\bsigma-\bsigma_h,\bu - \bu_h )\tri_{\eg,\theta}}
{\tri (\bsigma- I^{rt}_{h,\nu}\bsigma,\bu - I^{l2}_h\bu )\tri_{\full,\theta}}
\quad \mbox{or} \quad
\mbox{ind-err} := \frac{\tri (\bsigma-\bsigma_h,\bu - \bu_h )\tri_{\eg,\theta}}
{\tri (\bsigma- I^{bdm}_{h,\nu}\bsigma,\bu - I^{l2}_h\bu )\tri_{\full,\theta}}
\eeq
for $\bRT_{0,\nu}\times S_{1,0}^d$ or $\bBDM_{1,\nu}\times S_{2,0}^d$ respectively.
If the method is robust, the number $\mbox{ind-err}$ should be a number smaller than $1$ and of similar size for different $\nu$, .

Tables \ref{tab_AugRT_compare}-\ref{tab_Augbdm_compare} show the index of our proposed methods for Data 1-Data 5 with the stopping criteria $\mbox{rel-err}<0.11$ for the case using $\bRT_{0,\nu}\times S^d_{1,0}$, and $\mbox{rel-err}<0.05$ for the case of $\bBDM_{1,\nu}\times S^d_{2,0}$. From the tables, we observe that as $\nu$ becomes larger, the regularity index $\a$ becomes smaller, but the indexes of a prior estimate are of similar size. This confirms that the proposed augmented mixed method is indeed robust with respect to $\nu$.

\begin{table}[htbp]
\caption{Index of the a prior estimate, number of refinements $k$, number of elements $n$, the final 
$\tri (\bsigma-\bsigma_h,\bu - \bu_h )\tri_{\eg,\theta}$,  
and the final interpolation error 
$\tri (\bsigma- I^{rt}_{h,\nu}\bsigma,
\bu - I^{l2}_h\bu )\tri_{\full,\theta}$
with  $\bRT_{0,\nu}\times S^d_{1,0}$ and $\theta_K = 1$.}
\label{tab_AugRT_compare}
\begin{tabular}{|c|c|c|c|c|c|c|}
\hline
&$\mbox{ind-err}$& $k$  & $n$  & 
$ \tri (\bsigma-\bsigma_h,\bu - \bu_h )\tri_{\eg,\theta}$ &
$\tri (\bsigma- I^{rt}_{h,\nu}\bsigma,
\bu - I^{l2}_h\bu )\tri_{\full,\theta}$  \\\hline
$\mbox{Data}1$ & $0.9855$& $49$  &  $4416$ & $0.4914$ & $0.4986$  \\\hline
$\mbox{Data}2$ & $0.9558$& $39$  &  $4548$ & $0.2279$ & $0.2384$ \\\hline
$\mbox{Data}3$ & $0.9437$& $27$  &  $3318$ & $0.1677$ & $0.1777$ \\\hline
$\mbox{Data}4$ & $0.9242$& $19$  &  $2064$ & $0.2285$ & $0.2472$ \\\hline
$\mbox{Data}5$ & $0.9239$& $14$  &  $1296$ & $0.7443$ & $0.7750$ \\\hline
\end{tabular}
\end{table}

\begin{table}[htbp]
\caption{Index of the a prior estimate, number of refinements $k$, number of elements $n$, the final 
$\tri (\bsigma-\bsigma_h,\bu - \bu_h )\tri_{\eg,\theta}$,  
and the final interpolation error 
$\tri (\bsigma- I^{rt}_{h,\nu}\bsigma,
\bu - I^{l2}_h\bu )\tri_{\full,\theta}$
with $\bRT_{0,\nu}\times S^d_{1,0}$ and $\theta_K = h^2_K$.}
\label{tab_AugRTh_compare}
\begin{tabular}{|c|c|c|c|c|c|c|}
\hline
&$\mbox{ind-err}$& $k$  & $n$  & 
$ \tri (\bsigma-\bsigma_h,\bu - \bu_h )\tri_{\eg,\theta}$ &
$\tri (\bsigma- I^{rt}_{h,\nu}\bsigma,
\bu - I^{l2}_h\bu )\tri_{\full,\theta}$\\\hline
$\mbox{Data}1$ & $0.9798$& $57$  &  $3896$ & $0.4665$ & $0.4761$  \\\hline
$\mbox{Data}2$ & $0.9550$& $43$  &  $4114$ & $0.2237$ & $0.2343$  \\\hline
$\mbox{Data}3$ & $0.9454$& $29$  &  $3002$ & $0.1675$ & $0.1772$ \\\hline
$\mbox{Data}4$ & $0.9308$& $21$  &  $2084$ & $0.2160$ & $0.2320$ \\\hline
$\mbox{Data}5$ & $0.9239$& $15$  &  $1256$ & $0.7333$ & $0.7937$  \\\hline
\end{tabular}
\end{table}

\begin{table}[htbp]
\caption{Index of the a prior estimate, number of refinements $k$, number of elements $n$, the final 
$\tri (\bsigma-\bsigma_h,\bu - \bu_h )\tri_{\eg,\theta}$,  
and the final interpolation error 
$\tri (\bsigma- I^{bdm}_{h,\nu}\bsigma,
\bu - I^{l2}_h\bu )\tri_{\full,\theta}$
with $\bBDM_{1,\nu}\times S^d_{2,0}$ and $\theta_K = h_K^2$.}
\label{tab_Augbdm_compare}
\begin{tabular}{|c|c|c|c|c|c|c|}
\hline
&$\mbox{ind-err}$& $k$  & $n$  & 
$ \tri (\bsigma-\bsigma_h,\bu - \bu_h )\tri_{\eg,\theta}$ &
$\tri (\bsigma- I^{bdm}_{h,\nu}\bsigma,
\bu - I^{l2}_h\bu )\tri_{\full,\theta}$ \\\hline
$\mbox{Data}1$ & $0.9820$& $101$  &  $4140$ & $0.0605$ & $0.0616$ \\\hline
$\mbox{Data}2$ & $0.9761$& $63$  &  $2528$ & $0.0382$ & $0.0391$  \\\hline
$\mbox{Data}3$ & $0.9762$& $40$  &  $1564$ & $0.0311$ & $0.0319$ \\\hline
$\mbox{Data}4$ & $0.9693$& $28$  &  $1048$ & $0.0410$ & $0.0423$\\\hline
$\mbox{Data}5$ & $0.9458$& $21$  &  $768$ & $0.1225$ & $0.1295$  \\\hline
\end{tabular}
\end{table}

\subsection{Adaptive convergence tests}
For the proposed methods, we present some adaptive convergence results.  We use the least-squares functional a posteriori error estimator \eqref{ap_lsfem}. 
The relative error is computed by 
\beq\notag
\mbox{rel-err} := \frac{\tri (\bsigma-\bsigma_h,\bu - \bu_h )\tri_{\eg,\theta}}
{\tri (\bsigma,\bu )\tri_{\eg,\theta}}.
\eeq

In the first part of the numerical tests, we test the methods with Data 1 to check the adaptive meshes the estimator generated and convergence histories.  
For the discrete problem \eqref{eq_varfrom_dis} with $\theta = 1$ and $\bRT_{0,\nu}\times S^2_{1,0}$,  the stopping criteria is given as $\mbox{rel-err}<0.11$ and the parameter in D\"{o}rfler marking strategy is $0.15$.  The decay of $Dof^{-1/2}$, $\eta_\theta$, $\tri(\bsigma-\bsigma_h, \bu - \bu_h)\tri_{\eg,\theta}$ are shown on the left of Figure \ref{fig_line_mesh_Data1} and the right of Figure \ref{fig_line_mesh_Data1} shows the meshes with $2228$ degrees of freedom at the final ($49$-th) loop. \begin{figure}[htbp]
  \centering
  \begin{minipage}[t]{0.45\linewidth}
  \includegraphics[scale=0.5]{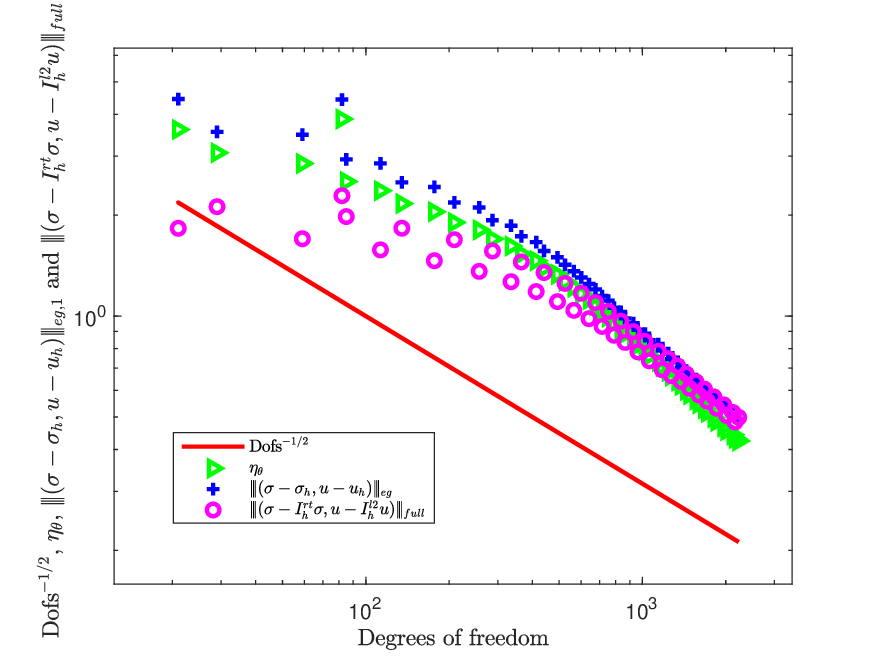}   
  \end{minipage}
    \begin{minipage}[t]{0.45\linewidth}
  \includegraphics[scale=0.5]{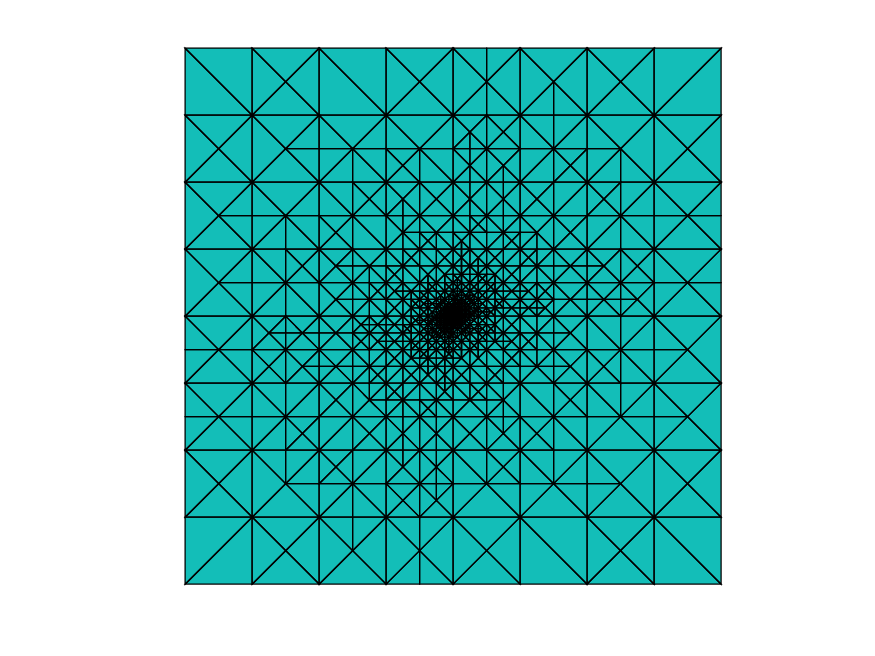}   
  \end{minipage}
  \caption{Adaptive results with Data 1, $\theta|_K = 1$, $RT_{0,\nu}\times S^d_{1,0}$: The reference line, $\eta_\theta,$ energy error, error by interpolation, and the final mesh}
    \label{fig_line_mesh_Data1}
\end{figure}
For the discrete problem \eqref{eq_varfrom_dis}  with $\theta_K = h_K^2$  and $ \bRT_{0,\nu}\times S^2_{1,0}$,   the stopping criteria is given as $\mbox{rel-err}<0.11$ and the parameter in D\"{o}rfler marking strategy is $0.15$.  The decay of $Dof^{-1/2},\eta_\theta, \tri(\bsigma-\bsigma_h, \bu - \bu_h)\tri_{\eg,\theta}$ are shown on the left of Figure \ref{fig_line_mesh_Data1_h} and the right of Figure \ref{fig_line_mesh_Data1_h} shows the meshes with $1969$ degrees of freedom at the final  ($57$-th) loop.
\begin{figure}[htbp]
  \centering
  \begin{minipage}[t]{0.45\linewidth}
  \includegraphics[scale=0.5]{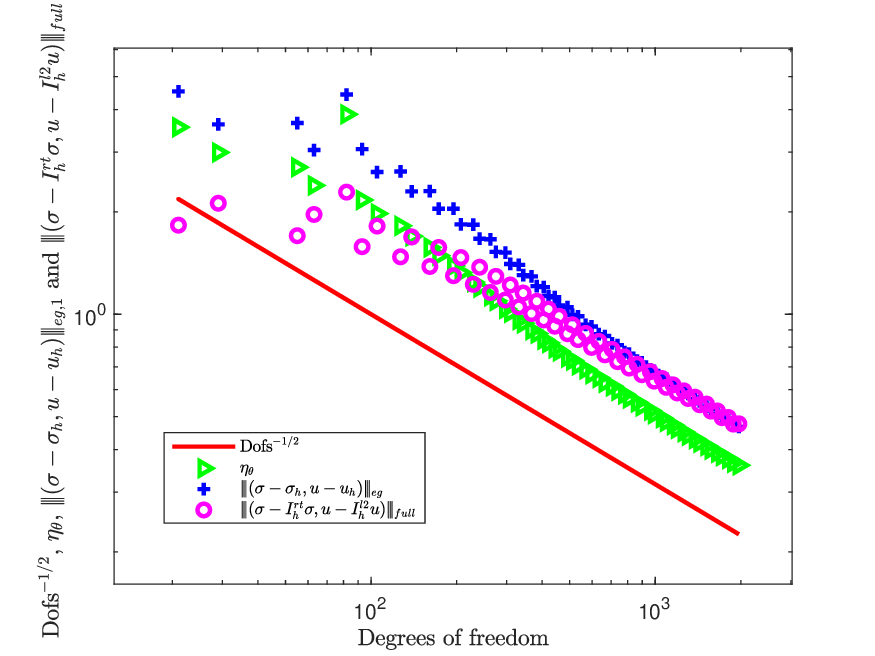}   
  \end{minipage}
    \begin{minipage}[t]{0.45\linewidth}
  \includegraphics[scale=0.5]{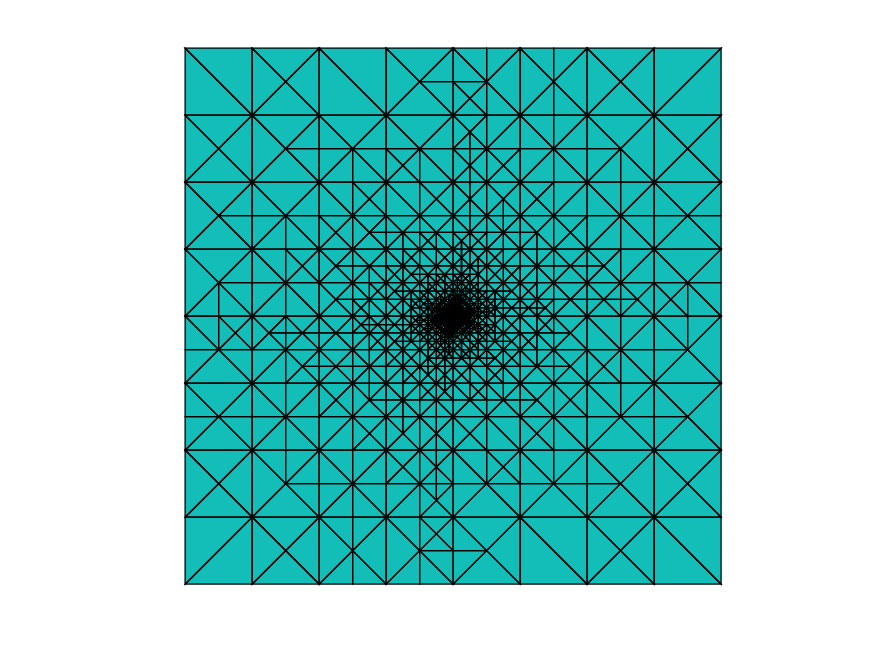}   
  \end{minipage}
  \caption{Adaptive results with Data 1, $\theta|_K = h_K$, $RT_{0,\nu}\times S^d_{1,0}$: The reference line, $\eta_\theta,$ energy error, error by interpolation, and the final mesh}
    \label{fig_line_mesh_Data1_h}
\end{figure}
For the case of $\bBDM_{1,\nu}\times S^d_{2,0}$ and $\theta_K = h_K^2$,  the stopping criteria is $\mbox{rel-err}<0.08$ and the parameter in D\"{o}fler marking strategy is $0.15$. The decay of $Dof^{-1/2},\eta_\theta, \tri(\bsigma-\bsigma_h, \bu - \bu_h)\tri_{\eg,\theta}$ are shown on the left of  Figure \ref{fig_line_mesh_Data1_h_BDM} and the right of Figure \ref{fig_line_mesh_Data1_h_BDM} shows the meshes with $465$ degrees of freedom the final  ($49$-th) loop.

All three numerical tests show optimal convergence and the final meshes are similar to the classical Kellogg's test in \cite{CD:02,CZ:09,CZ:10a,CZ:12,CHZ:17,CHZ:17zz,CHZ:21} and the decay of the errors are optimal.

\begin{figure}[htbp]
  \centering
  \begin{minipage}[t]{0.45\linewidth}
  \includegraphics[scale=0.5]{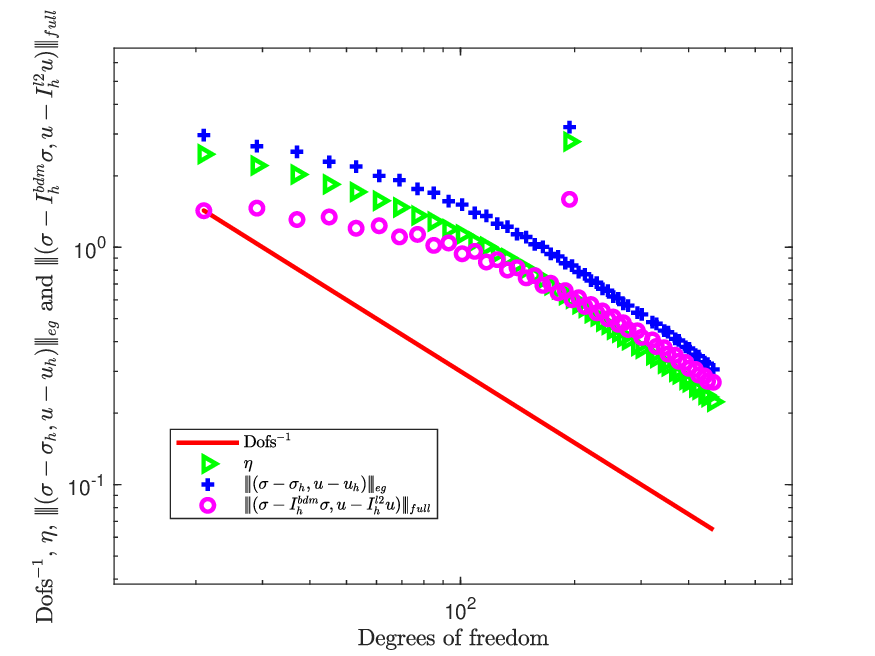}   
  \end{minipage}
    \begin{minipage}[t]{0.45\linewidth}
  \includegraphics[scale=0.5]{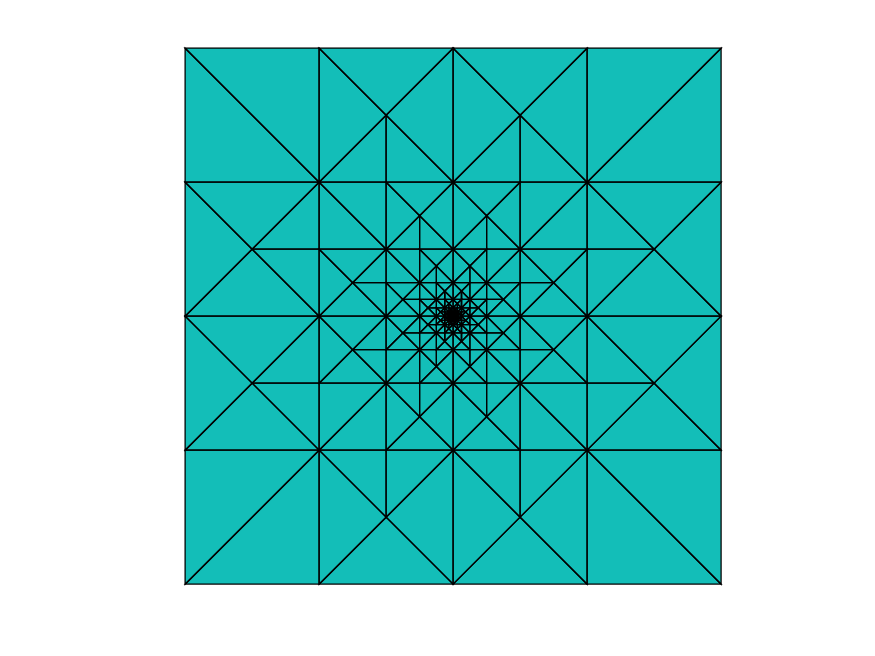}   
  \end{minipage}
  \caption{The reference line, $\eta_\theta,$ energy error, error by interpolation and the meshes of the example Data 1, with $\theta|_K = h^2_K$, using  $BDM_{1,\nu}\times S^d_{2,0}.$}
    \label{fig_line_mesh_Data1_h_BDM}
\end{figure}

\subsection{Tests on the robustness of a posteriori error estimators}
For the proposed methods, we test the robustness of the least-squares functional a posteriori error estimator \eqref{ap_lsfem} proposed in Section 7 even though there is no robustness proof.
The effectivity index is computed as 
\beq\notag
\mbox{eff-index} :=  
\frac{\tri(\bsigma - \bsigma_h 
, \bu - \bu_h)\tri_{\eg,\theta}}
{\eta_\theta}.
\eeq
If the error estimator is robust, then the $\mbox{eff-index}$ should almost be a constant for different choices of $\nu$.

The following Table \ref{tab_adap_theta1RT}-Table \ref{tab_adap_theta2Bdm} show the effectivity indexes of three different formulations with the exact solutions of Data1-Data5. The parameter in the D\"ofler marking strategy is $0.15$. The stopping criteria $\mbox{rel-err}<0.11$ is used for Table \ref{tab_adap_theta1RT} and Table \ref{tab_adap_theta2RT}  and the stopping criteria $\mbox{rel-err} < 0.02$ is used for Table \ref{tab_adap_theta2Bdm}. As shown in these tables, the $\mbox{eff-index}$ are  almost a constant for different choices of $\nu$ for each case. Thus, the least-squares functional a posteriori error estimators are computationally robust even though there is no rigorously proof yet.

\begin{table}[htbp]
\caption{The discrete problem \eqref{eq_varfrom_dis}: eff-index, number of refinements $k$, number of elements $n$, the final $\eta_\theta$, and the final error $\tri(\bsigma-\bsigma_h
,\bu-\bu_h)\tri_{eg,\theta},$
with $\bRT_{0,\nu}\times S^d_{1,0}$ and $\theta_K = 1.$}
\label{tab_adap_theta1RT}
\begin{tabular}{|c|c|c|c|c|c|}
\hline
&$\mbox{eff-index}$& $k$  & $n$  & $\eta_\theta$ &
$\tri(\bsigma-\bsigma_h
,u-u_h)\tri_{\eg,\theta}$\\\hline
$\mbox{Data}1$ & $1.1737$& $49$  &  $4388$ & $0.4228$ & $0.4962$ \\\hline
$\mbox{Data}2$ & $1.2099$& $39$  &  $4548$ & $0.1883$ & $0.2279$ \\\hline
$\mbox{Data}3$ & $1.2123$& $27$  &  $3318$ & $0.1383$& $0.1677$ \\\hline
$\mbox{Data}4$ & $1.2082$& $19$  &  $2064$ & $0.1891$& $0.2285$ \\\hline
$\mbox{Data}5$ & $1.1909$& $14$  &  $1296$ & $0.6250$& $0.7443$ \\\hline
\end{tabular}
\end{table}

\begin{table}[htbp]
\caption{The discrete problem \eqref{eq_varfrom_dis}: eff-index, number of refinements $k$, number of elements $n$, the final $\eta_\theta$, and the final error $\tri(\bsigma-\bsigma_h
,\bu-\bu_h)\tri_{eg,\theta},$
with $\bRT_{0,\nu}\times S^d_{1,0}$ and $\theta_K = h^2_K.$}
\label{tab_adap_theta2RT}
\begin{tabular}{|c|c|c|c|c|c|}
\hline
&$\mbox{eff-index}$& $k$  & $n$  & $\eta_\theta$ &
$\tri(\bsigma-\bsigma_h
,u-u_h)\tri_{\eg,\theta}$\\\hline
$\mbox{Data}1$ & $1.2973$& $57$  &  $3896$ & $0.3596$ & $0.4665$ \\\hline
$\mbox{Data}2$ & $1.2717$& $43$  &  $4114$ & $0.1759$ & $0.2237$ \\\hline
$\mbox{Data}3$ & $1.2512$& $29$  &  $3002$ & 
$0.1339$& $0.1675$ \\\hline
$\mbox{Data}4$ & $1.2413$& $21$  &  $2084$ & $0.1740$& $0.2160$ \\\hline
$\mbox{Data}5$ & $1.2346$& $15$  &  $1256$ & $0.5939$& $0.7333$ \\\hline
\end{tabular}
\end{table}

\begin{table}[htbp]
\caption{The discrete problem \eqref{eq_varfrom_dis}: eff-index, number of refinements $k$, number of elements $n$, the final $\eta_\theta$, and the final error $\tri(\bsigma-\bsigma_h
,\bu-\bu_h)\tri_{eg,\theta},$
with $\bBDM_{1,\nu}\times S^d_{2,0}$ and $\theta_K = h^2_K.$}
\label{tab_adap_theta2Bdm}
\begin{tabular}{|c|c|c|c|c|c|}
\hline
&$\mbox{eff-index}$& $k$  & $n$  & $\eta_\theta$ &
$\tri(\bsigma-\bsigma_h
,u-u_h)\tri_{\eg,\theta}$\\\hline
$\mbox{Data}1$ & $1.3586$& $101$  &  $4140$ & $0.0445$ & $0.0605$ \\\hline
$\mbox{Data}2$ & $1.3293$& $63$  &  $2528$ & $0.0287$ & $0.0382$ \\\hline
$\mbox{Data}3$ & $1.3038$& $40$  &  $1564$ & 
$0.0239$& $0.0311$ \\\hline
$\mbox{Data}4$ & $1.2751$& $28$  &  $1048$ & $0.0321$& $0.0410$ \\\hline
$\mbox{Data}5$ & $1.2417$& $21$  &  $768$ & $0.0986$& $0.1225$ \\\hline
\end{tabular}
\end{table}

\section{Concluding Remarks}
In this paper, we propose a robust augmented mixed finite element formulation for the stationary Stokes problem with a piecewise constant viscosity coefficient in multiple subdomains. This is the first one of such methods for the case of multiple subdomains. The main idea is that with augmented mixed methods, the inf-sup condition can be avoided. In the robust continuity, we use two norms, one energy norm and one full norm, to get the upper bound. Additionally, we introduce a Kellogg-type example with exact solutions for the first time, which will be useful as a benchmark test problem for Stokes problems with singularities.  

As mentioned in the paper, there exist alternative pathways to achieve robustness without relying on  inf-sup conditions. One approach involves the utilization of (strongly or weakly) divergence-free finite elements, while another way  explores the application of different stabilization techniques.  We will pursue these directions in our future research.

\bibliographystyle{plain}
\bibliography{../bib/szhang}
\section{Appendix: Nonlinear system to generate the Kellogg-type example}
\setcounter{equation}{0}
In this appendix, we present a detailed derivation of the nonlinear system consisting 16 equations used in Section \ref{kellogg} to construct the Kellogg-type example. By \eqref{BB}, we have
\beq 
B_1|_{\O_i} = r^\a(\ma_i\sin(\a\vartheta) + \mb_i\cos(\a\vartheta) ) 
\quad\mbox{and}\quad B_2|_{\O_i} = r^\a(\mc_i\sin(\a\vartheta) + \md_i\cos(\a\vartheta) )
\quad i=1,\cdots,4.
\eeq
To compute $\bu = \grad(\bx\cdot{\bf B}) - 2{\bf B}$, we first compute derivatives, 
\begin{eqnarray*}
\p_{x}B_1|_{\O_i}
= \a r^{\a-1}( \ma_i\sin(\a-1)\vartheta + \mb_i\cos(\a-1)\vartheta ), \quad
\p_{y}B_1|_{\O_i}
= \a r^{\a-1}( \ma_i\cos(\a-1)\vartheta - \mb_i\sin(\a-1)\vartheta ),\\
\p_{x}B_2|_{\O_i}
= \a r^{\a-1}( \mc_i\sin(\a-1)\vartheta + \md_i\cos(\a-1)\vartheta ),  \quad
\p_{y}B_2|_{\O_i}
= \a r^{\a-1}( \mc_i\cos(\a-1)\vartheta - \md_i\sin(\a-1)\vartheta ).
\end{eqnarray*}
Then we have, 
\begin{eqnarray*}
\p_{x}(\bx\cdot{\bf B})|_{\O_i}
&=& \a r^{\a-1}[ (\ma_ix + \mc_iy)\sin(\a-1)\vartheta
+ (\mb_ix + \md_iy)\cos(\a-1)\vartheta ] + B_1|_{\O_i},\\
\p_{y}(\bx\cdot{\bf B})|_{\O_i}
&=& \a r^{\a-1}[ (\ma_ix + \mc_iy)\cos(\a-1)\vartheta
- (\mb_ix + \md_iy)\sin(\a-1)\vartheta ] + B_2|_{\O_i}.
\end{eqnarray*}
Thus by $\bu = \grad(\bx\cdot{\bf B}) - 2{\bf B} = (u_1,u_2)^t$ in each $\O_i$, we have
\begin{eqnarray}\notag
u_1|_{\O_i} &=& 
\p_{x}(\bx\cdot{\bf B})|_{\O_i}
- 2 B_1|_{\O_i} \\ \notag
&=& \a r^{\a-1}[ (\ma_ix + \mc_iy)\sin(\a-1)\vartheta
+ (\mb_ix + \md_i y)\cos(\a-1)\vartheta ] - B_1|_{\O_i},\\\label{eq_u1xy}
&=& \a r^\a[ (\ma_i\cos\vartheta + \mc_i\sin\vartheta)\sin(\a-1)\vartheta
+ (\mb_i\cos\vartheta + \md_i\sin\vartheta)\cos(\a-1)\vartheta ] - B_1|_{\O_i}\\ \notag
u_2|_{\O_i} &=& 
\p_{y}(\bx\cdot{\bf B})|_{\O_i}- 2 B_2|_{\O_i} \\ \notag
&=& \a r^{\a-1}[ (\ma_ix + \mc_iy)\cos(\a-1)\vartheta
- (\mb_ix + \md_iy)\sin(\a-1)\vartheta ] - B_2|_{\O_i},\\ \label{eq_u2xy}
&=& \a r^\a[ (\ma_i\cos\vartheta + \mc_i\sin\vartheta)\cos(\a-1)\vartheta
- (\mb_i\cos\vartheta + \md_i\sin\vartheta)\sin(\a-1)\vartheta ] - B_2|_{\O_i}.
\end{eqnarray}
To compute $\bsigma$, we first compute the following derivatives from \eqref{eq_u1xy} and \eqref{eq_u2xy}:
\begin{eqnarray*}
\p_{x} u_1|_{\O_i}
&=& \a(\a-1) r^{\a-1}[ (\ma_i\cos\vartheta + \mc_i\sin\vartheta)\sin(\a-2)\vartheta
+ (\mb_i\cos\vartheta + \md_i\sin\vartheta)\cos(\a-2)\vartheta ],\\
\p_{y} u_1|_{\O_i}
&=& \a(\a-1) r^{\a-1}[ (\ma_i\cos\vartheta + \mc_i\sin\vartheta)\cos(\a-2)\vartheta
- (\mb_i\cos\vartheta + \md_i\sin\vartheta)\sin(\a-2)\vartheta ]\\
&& + \p_{x} B_2|_{\O_i} - \p_{y} B_1|_{\O_i},\\
\p_{x} u_2|_{\O_i}
&=& \a(\a-1) r^{\a-1}[ (\ma_i\cos\vartheta + \mc_i\sin\vartheta)\cos(\a-2)\vartheta
- (\mb_i\cos\vartheta + \md_i\sin\vartheta)\sin(\a-2)\vartheta ]\\
&& + \p_{y} B_1|_{\O_i} - \p_{x} B_2|_{\O_i},\\
\p_{y} u_2|_{\O_i}
&=& -\a(\a-1) r^{\a-1}[ (\ma_i\cos\vartheta + \mc_i\sin\vartheta)\sin(\a-2)\vartheta
+ (\mb_i\cos\vartheta + \md_i\sin\vartheta)\cos(\a-2)\vartheta ].
\end{eqnarray*}
The weighted pressure $\nu_i^{-1} p$ is, 
\begin{eqnarray*}
\nu_i^{-1} p|_{\O_i} = \gradt{\bf B}|_{\O_i}
= \a r^{\a-1}( \ma_i\sin(\a-1)\vartheta + \mb_i\cos(\a-1)\vartheta
+ \mc_i\cos(\a-1)\vartheta - \md_i\sin(\a-1)\vartheta)
\end{eqnarray*}
By $\bsigma = \nu\beps(\bu)-pI$, for $i=1,\cdots,4$, we have 
\begin{eqnarray*}
\sigma_{11}|_{\O_i} =  \nu_i \p_{x} u_1|_{\O_i}
- \nu_i\gradt{\bf B}|_{\O_i},
\sigma_{12}|_{\O_i} = \sigma_{21}|_{\O_i}
= \nu_i(\p_{y} u_1|_{\O_i} + \p_{x} u_2|_{\O_i})/2,
\sigma_{22}|_{\O_i} = 
\nu_i \p_{y} u_2|_{\O_i} -\nu_i\gradt{\bf B}|_{\O_i}.
\end{eqnarray*}

%
%

The continuous condition $\jump{u_1} = 0$ and $\jump{u_2} = 0$ on the interface for $\vartheta = \pi/2,\pi,\frac{3\pi}2$ and $2\pi$ gives the following 8 equations,
\begin{eqnarray}
 \label{eq01}
-[\a(\mc_1-\mc_2) + (\mb_1-\mb_2)]\cos\frac{\pi}2\a 
+ [ \a(\md_1-\md_2) - (\ma_1-\ma_2)]\sin\frac{\pi}2\a  &=& 0,\\
(\ma_2-\ma_3)\sin\pi\a + (\mb_2-\mb_3)\cos\pi\a &=& 0,\\
-[\a(\mc_3-\mc_4) + (\mb_3-\mb_4)]\cos\frac{3\pi}2\a  
+ [ \a(\md_3-\md_4) - (\ma_3-\ma_4)]\sin\frac{3\pi}2\a  &=& 0,\\
\ma_4\sin2\pi\a + \mb_4\cos2\pi\a - \mb_1 &=& 0,\\
(\mc_1 - \mc_2)\sin\frac{\pi}2\a + (\md_1 - \md_2)\cos\frac{\pi}2\a  &=& 0,\\
{[} \a (\ma_2 - \ma_3) - (\md_2 - \md_3) ]\cos\pi\a 
- [ \a (\mb_2 - \mb_3) + (\mc_2 - \mc_3) ]\sin\pi\a &=& 0,\\
(\mc_3 - \mc_4)\sin\frac{3\pi}2\a + (\md_3 - \md_4)\cos\frac{3\pi}2\a &=& 0,\\
(\a \ma_4 - \md_4)\cos2\pi\a - (\a \mb_4 + \mc_4)\sin2\pi\a - (\a \ma_1 - \md_1) &=& 0.
\end{eqnarray}
The normal jump of $\bsigma$ over interfaces reads
$\jump{\bsigma_{11}}|_{\vartheta = \pi/2}
= \jump{\bsigma_{12}}|_{\vartheta = \pi}
= \jump{\bsigma_{11}}|_{\vartheta = 3\pi/2}
= \jump{\bsigma_{12}}|_{\vartheta = 2\pi} = 0$
and $\jump{\bsigma_{21}}|_{\vartheta = \pi/2}
= \jump{\bsigma_{22}}|_{\vartheta = \pi}
= \jump{\bsigma_{21}}|_{\vartheta = 3\pi/2}
= \jump{\bsigma_{22}}|_{\vartheta = 2\pi} = 0$.
We have the following 8 equations:
\begin{eqnarray}
\nu_1[ (\a \mc_1+\mb_1)\sin\frac{\pi}2\a + (\a \md_1-\ma_1)\cos\frac{\pi}2\a ]
&=& \nu_2[ (\a \mc_2+\mb_2)\sin\frac{\pi}2\a + (\a \md_2-\ma_2)\cos\frac{\pi}2\a ],\\
\nu_2( \ma_2\cos\pi\a - \mb_2\sin\pi\a )
&=& \nu_3( \ma_3\cos\pi\a - \mb_3\sin\pi\a ),\\
\nu_3[ (\a \mc_3+\mb_3)\sin\frac{3\pi}2\a + (\a \md_3-\ma_3)\cos\frac{3\pi}2\a ]
&=& \nu_4[ (\a \mc_4+\mb_4)\sin\frac{3\pi}2\a + (\a \md_4-\ma_4)\cos\frac{3\pi}2\a ],\\
\nu_4( \ma_4\cos2\pi\a - \mb_4\sin2\pi\a )
&=& \nu_1 \ma_1,\\
\nu_1( \mc_1\cos\frac{\pi}2\a - \md_1\sin\frac{\pi}2\a )
&=& \nu_2( \mc_2\cos\frac{\pi}2\a - \md_2\sin\frac{\pi}2\a ),\\
\nu_2[ (\a \ma_2 - \md_2)\sin\pi\a + (\a \mb_2 + \mc_2)\cos\pi\a ]
&=& \nu_3[ (\a \ma_3 - \md_3)\sin\pi\a + (\a \mb_3 + \mc_3)\cos\pi\a ],\\
\nu_3( \mc_3\cos\frac{3\pi}2\a - \md_3\sin\frac{3\pi}2\a )
&=& \nu_4( \mc_4\cos\frac{3\pi}2\a - \md_4\sin\frac{3\pi}2\a ),\\ \label{eq16}
\nu_4[ (\a \ma_4 - \md_4)\sin2\pi\a + (\a \mb_4 + \mc_4)\cos2\pi\a ]
&=& \nu_1(\a \mb_1 + \mc_1).
\end{eqnarray}
Thus, we get the nonlinear system with 16 equations \eqref{eq01}-\eqref{eq16} for $\a$, $\nu_i$,  $\ma_i,\mb_i,\mc_i$ and $\md_i$ for $i=1,\cdots,4$.
\end{document}